\newcommand\beq{\begin{equation}}
\newcommand\eeq{\end{equation}}
\newcommand\bce{\begin{center}}
\newcommand\ece{\end{center}}
\newcommand\bea{\begin{eqnarray}}
\newcommand\eea{\end{eqnarray}}
\newcommand\ba{\begin{array}}
\newcommand\ea{\end{array}}
\newcommand\ben{\begin{enumerate}}
\newcommand\een{\end{enumerate}}
\newcommand\bit{\begin{itemize}}
\newcommand\eit{\end{itemize}}
\newcommand\brr{\begin{array}}
\newcommand\err{\end{array}}
\newcommand\bt{\begin{tabular}}
\newcommand\et{\end{tabular}}
\newcommand\ul{\underline}
\renewcommand\S{{\mathcal S}}
\newcommand\ele{\varepsilon}
 \DeclareMathOperator\SYT{SYT}
\newcommand\x{{\mathbf x}}
\newcommand{\Q}{{\mathcal Q}}
\newcommand{\rot}{\mathrm{rot}}
\newcommand{\bbz}{\mathbb{Z}}
\newcommand{\bbc}{\mathbb{C}}
\DeclareMathOperator\Des{Des}
\DeclareMathOperator\cDes{cDes}
\DeclareMathOperator\id{id}
\newcommand{\ch}{\operatorname{ch}}
\newcommand{\jdt}{\operatorname{jdt}}
\newcommand{\ijdt}{\operatorname{ijdt}}
\newtheorem{theorem}{Theorem}[section]
\newtheorem{proposition}[theorem]{Proposition}
\newtheorem{lemma}[theorem]{Lemma}
\newtheorem{problem}[theorem]{Problem}
\theoremstyle{definition}
\newtheorem{definition}[theorem]{Definition}
\newtheorem{example}[theorem]{Example}
\theoremstyle{remark}
\newtheorem{remark}[theorem]{Remark}
\newtheorem{observation}[theorem]{Observation}
\title{On rotated Schur-positive sets}
\author{Sergi Elizalde~\thanks{Department of Mathematics, Dartmouth College, Hanover, NH 03755, USA. {\tt sergi.elizalde@dartmouth.edu}.
Partially supported by Simons Foundation grant \#280575 and NSA grant H98230-14-1-0125.
} \and Yuval
Roichman~\thanks{Department of Mathematics, Bar-Ilan University,
 Ramat-Gan 52900, Israel.  {\tt yuvalr@math.biu.ac.il}.}}
\date{}
\begin{document}
\maketitle

\begin{abstract}
The problem of finding Schur-positive sets of permutations,
originally posed by Gessel and Reutenauer, has seen some recent
developments. Schur-positive sets of pattern-avoiding permutations
have been found by Sagan et al and a general construction based on
geometric operations on grid classes 
has been given by the authors. In this paper we prove that
horizontal rotations of Schur-positive subsets of permutations are
always Schur-positive.
The proof applies a cyclic action on
standard Young tableaux
of certain skew shapes and a {\it jeu-de-taquin} type straightening algorithm.
As a consequence of the proof we obtain a notion of cyclic descent set on these tableaux, which is rotated by the cyclic action on them.
\end{abstract}

\noindent \textit{Keywords:} Schur-positivity, cyclic descent, standard Young tableau, horizontal rotation, cyclic action \medskip

\noindent \textit{Mathematics subject classification:} 05E05, 05A05, 05E18; 05E10, 05A19

\section{Introduction}

For each $D \subseteq [n-1]=\{1,2,\dots,n-1\}$, define the fundamental {\em
quasisymmetric function}
\[
F_{n,D}(\x) := \sum\limits_{i_1\le i_2 \le \ldots \le i_n \atop {i_j <
i_{j+1} \text{ if } j \in D}} x_{i_1} x_{i_2} \cdots x_{i_n}.
\]
For a subset of permutations $B\subseteq \S_n$, let
\[
\Q(B):=\sum\limits_{\pi\in B} F_{n,\Des(\pi)},
\]
where $\Des(\pi)=\{i:\ \pi(i)>\pi(i+1)\}$ is the descent set of
$\pi$. If $B$ is a multiset of permutations in $\S_n$, we define
$\Q(B)$ analogously, by adding $F_{n,\Des(\pi)}$ as many times as
the multiplicity of $\pi$ in $B$.

We say that $B$ is {\em symmetric} if $\Q(B)$ is a symmetric
function. In this case, we say that $B$ is {\em Schur-positive} if
the expansion of $\Q(B)$ in the basis of Schur functions has
nonnegative coefficients. The problem of determining whether a
given subset of permutations is symmetric and Schur-positive was
first posed in~\cite{Gessel-Reutenauer}, see
also~\cite{Staley_thesis}, \cite{Gessel} and~~\cite{Stanley_problems}. The search for Schur-positive
subsets is an active area of research~\cite{Adin-R, Sh-Wachs,
Sagan_talk, CA2, CA, ER_pos_grids}.

For $J\subseteq [n-1]$, define the {\em descent class}
\[
D_{n,J}:=\{\pi\in \S_n:\ \Des(\pi)=J\}.
\]
and its inverse $D_{n,J}^{-1}=\{\pi:\pi^{-1}\in D_{n,J}\}$. It was shown in~\cite{Gessel} that inverse descent classes are Schur-positive.

Let $c$ be the $n$-cycle $(1,2,\dots,n)$, and let $C_n=\langle
c\rangle=\{c^k:\ 0\le k< n\}$, the cyclic subgroup of $\S_n$
generated by $c$. Given $\pi\in\S_n$, a permutation of the form $\pi c^k$ is called a {\em horizontal rotation} of $\pi$. Horizontal rotations played an important role in~\cite{ER_pos_grids}.

Any set $A\subseteq\S_{n-1}$ can be interpreted as a subset of $\S_n$ by identifying $\S_{n-1}$ with the set of the permutations in $\S_n$ that fix $n$. Then, $AC_n$ is the set of horizontal rotations of elements in $A$,
$$AC_n=\{\pi(k+1)\pi(k+2)\dots\pi(n-1)\,n\,\pi(1)\pi(2)\dots\pi(k)\,:\,\pi\in A,\, 0\le k< n\}.$$
Note in particular that all elements in $AC_n$ appear with
multiplicity one.

\smallskip

It was recently shown that horizontally rotated inverse descent
classes are always Schur-positive.

\begin{theorem}[{\cite[Theorem 7.1]{ER_pos_grids}}]
\label{thm:ER} For every $J\subseteq [n-2]$, the set
$D_{n-1,J}^{-1}C_n$ is Schur-positive.
\end{theorem}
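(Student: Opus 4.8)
The plan is to reduce the statement to a purely cyclic question about fundamental quasisymmetric functions, and then to settle that question by modelling the descent sets that arise on standard Young tableaux carrying a cyclic action. First I would expand
\[
\Q(D_{n-1,J}^{-1}C_n)=\sum_{\pi\in D_{n-1,J}^{-1}}\ \sum_{k=0}^{n-1}F_{n,\Des(\pi c^k)} .
\]
Since every $\pi\in D_{n-1,J}^{-1}$ fixes $n$, I would read off the one-line word of $\pi c^k$ as the cyclic shift $\pi(k+1)\cdots\pi(n-1)\,n\,\pi(1)\cdots\pi(k)$ and compare consecutive entries: writing $\cDes(\pi):=\Des(\pi)\cup\{n\}$, a position $i\in[n-1]$ is a descent of $\pi c^k$ exactly when the residue of $i+k$ in $\{1,\dots,n\}$ lies in $\cDes(\pi)$ (the element $n$ is always present because $\pi(n)=n>\pi(1)$). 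Hence $\Des(\pi c^k)=(\cDes(\pi)-k)\cap[n-1]$, subtraction taken modulo $n$, so the rotation merely shifts the cyclic descent set. Summing over the orbit,
\[
\sum_{k=0}^{n-1}F_{n,\Des(\pi c^k)}=\sum_{k=0}^{n-1}F_{n,(\cDes(\pi)-k)\cap[n-1]}=:G(\Des(\pi)),
\]
which depends on $\pi$ only through $\Des(\pi)$. Therefore
\[
\Q(D_{n-1,J}^{-1}C_n)=\sum_{S\subseteq[n-2]}a_{J,S}\,G(S),\qquad a_{J,S}:=\#\{\pi\in D_{n-1,J}^{-1}:\Des(\pi)=S\}\ge 0,
\]
and since the coefficients $a_{J,S}$ are nonnegative, it suffices to prove that each cyclic sum $G(S)$ is symmetric and Schur-positive. (This reduction uses nothing about $J$ beyond nonnegativity of the counts, so it would in fact handle rotations of an arbitrary subset of $\S_{n-1}$.)

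It remains to show that for each $\tilde D:=S\cup\{n\}$ the cyclic sum $\sum_{k}F_{n,(\tilde D-k)\cap[n-1]}$, obtained by summing fundamentals over one full rotation orbit of the cyclic descent set, is symmetric and Schur-positive. Small cases already show that the output is genuinely Schur-positive yet not a nonnegative combination of ribbon Schur functions (for instance $G(\{2\})=2s_{22}$ at $n=4$), so the Schur-positivity of inverse descent classes alone does not suffice and a real cyclic input is needed. I would establish this by the method announced in the abstract: starting from Gessel's identification of an inverse descent class generating function with a ribbon Schur function $\sum_{T\in\SYT(R_S)}F_{\Des(T)}$, I would realize the rotation orbit of descent sets through a cyclic action $\rho$ on a family of standard Young tableaux, equipped with a cyclic descent map satisfying $\cDes(T)\cap[n-1]=\Des(T)$ and $\cDes(\rho T)=\cDes(T)+1\pmod n$; jeu-de-taquin straightening would then pass from the skew fillings to straight-shape tableaux and thereby exhibit a nonnegative Schur expansion, while invariance of the orbit-sum under $\rho$ delivers the symmetry.

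The main obstacle is that the cyclic action does not preserve the underlying skew shape: as the marked cell holding $n$ travels around the cycle the shape changes, so the substance of the proof is to define $\rho$ and $\cDes$ compatibly across these shapes via $\jdt$ and to verify the equivariance $\cDes(\rho T)=\cDes(T)+1$ through the straightening, together with the orbit–stabilizer bookkeeping that produces the correct multiplicities (such as the factor $2$ above, arising when $\tilde D$ has nontrivial cyclic symmetry). Controlling $\jdt$ under the rotation is exactly what makes the cyclic descent well defined, and I expect it to be the heart of the argument; granting it, the reduction of the first paragraph assembles the pieces into the Schur-positivity of $\Q(D_{n-1,J}^{-1}C_n)$.
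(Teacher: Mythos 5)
Your reduction in the first paragraph is where the proof breaks: the statement you reduce to---that every single-orbit sum $G(S)$ is symmetric and Schur-positive---is false. Take $n=4$ and $S=\{1\}$, realized by $\pi=2134$; note that $213\in D_{3,\{1\}}^{-1}$, so this orbit genuinely occurs in the theorem you are proving (with $J=\{1\}$). The four horizontal rotations are $2134$, $1342$, $3421$, $4213$, with descent sets $\{1\}$, $\{3\}$, $\{2,3\}$, $\{1,2\}$, and a direct monomial expansion gives
\[
G(\{1\})=F_{4,\{1\}}+F_{4,\{3\}}+F_{4,\{2,3\}}+F_{4,\{1,2\}}=m_{3,1}+2m_{2,1,1}+4m_{1,1,1,1}=s_{3,1}-s_{2,2}+s_{2,1,1},
\]
which is symmetric but \emph{not} Schur-positive. (Consistently: $\Q(D_{3,\{1\}}^{-1}C_4)=G(\{1\})+G(\{2\})$, where $G(\{2\})=\Q(\{132\}C_4)=2s_{2,2}$ by part 1 of the Remark in Section~\ref{sec:main}, and $\Q(D_{3,\{1\}}^{-1}C_4)=s_{2,1}s_1=s_{3,1}+s_{2,2}+s_{2,1,1}$ by Theorem~\ref{thm:main}; subtracting recovers the same expansion.) Your own parenthetical observation that the reduction ``would in fact handle rotations of an arbitrary subset of $\S_{n-1}$'' should have been a warning sign rather than a bonus: $\{2134\}C_4$ is exactly such a rotation of an arbitrary (singleton) set, and it is not Schur-positive. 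The Schur-positivity of $D_{n-1,J}^{-1}C_n$ is not an orbit-by-orbit phenomenon: negative Schur coefficients produced by one orbit are cancelled only by other orbits coming from permutations with \emph{different} descent sets, so no amount of work on a single cyclic sum $G(S)$ can complete your program.

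This is precisely where the paper's argument takes a different route. Rather than decomposing into rotation orbits, it first proves (Lemma~\ref{lem:QAC}) that $\Q(AC_n)$ depends only on $\Q(A)$, i.e., only on the descent-set distribution of $A$; it then replaces $A=D_{n-1,J}^{-1}$ (Schur-positive by Gessel's theorem) with a multiset union of canonical sets $A_\lambda$ satisfying $\Q(A_\lambda)=s_\lambda$ (inverse reading words of SYT of shape $\lambda$), and for each \emph{whole set} $A_\lambda$ it constructs a $\Des$-preserving bijection $A_\lambda C_n\to\SYT(\lambda^\Box)$ out of the map $\varphi$ and the $\jdt$ straightening, yielding $\Q(A_\lambda C_n)=s_{\lambda^\Box}=s_\lambda s_1$. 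Note that this bijection is fibered over the entry $\delta(P)=k$ in the disconnected box, that is, over the rotation amount $k$; a single orbit $\{\pi c^{-k}:0\le k<n\}$ is scattered with one element in each fiber, and its image is not a set of tableaux with any intrinsic closure property---which is exactly why the orbit-level positivity you need does not exist. The cyclic action, cyclic descent sets, and $\jdt$ machinery you invoke in your last two paragraphs are indeed the paper's tools, but they must be applied to the aggregated sets $A_\lambda C_n$ (equivalently, to all of $\SYT(\lambda^\Box)$ at once), not to individual orbits; as structured, your proposal cannot be repaired without discarding its central reduction.
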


The main result of this paper is a generalization of Theorem~\ref{thm:ER}, where $D_{n-1,J}^{-1}$ is replaced with an arbitrary Schur-positive set $A\subseteq \S_{n-1}$, stated in Theorem~\ref{thm:main}.

The proof involves a {\it jeu-de-taquin} type algorithm for nonstandard Young tableaux,
which is used to determine a $\bbz_n$-action on standard Young tableaux of certain skew shapes. This action
is analogous to the promotion cyclic
action on standard Young tableaux of rectangular shape introduced by
Rhoades~\cite{Rhoades}.
A byproduct of the proof is a notion of cyclic descent set on
standard Young tableaux of certain skew shapes, which is rotated by
the cyclic action on them.

\smallskip

The rest of the paper is organized as follows.
In Section~\ref{sec:main} we state the main result.
Section~\ref{sec:tools} introduces some tools needed for the proof, namely cyclic descent sets of permutations and rotated tableaux.
The proof of the main theorem appears in Section~\ref{sec:proof}. Section~\ref{sec:cDes} concludes with a discussion of cyclic descents of standard Young tableaux.

\section{Main Theorem}\label{sec:main}

We write $\lambda\vdash n$ to denote that $\lambda$ is a partition of $n$, and denote by $s_\lambda$ the Schur function indexed by $\lambda$. The following is our main result. When computing $\Q(A)$, we consider $A$ as a subset of $\S_{n-1}$.

\begin{theorem}\label{thm:main}
Let $n\ge2$. For every Schur-positive set $A\subseteq \S_{n-1}$,
the set $AC_n$ is Schur-positive. Additionally,
\begin{equation}\label{eq:main2}
\Q(AC_n)=\Q(A)s_1.
\end{equation}
\end{theorem}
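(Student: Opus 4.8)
The plan is to prove the stronger identity~\eqref{eq:main2}, from which Schur-positivity of $AC_n$ is immediate: if $\Q(A)=\sum_\lambda a_\lambda s_\lambda$ with all $a_\lambda\ge0$, then Pieri's rule gives $\Q(A)s_1=\sum_\lambda a_\lambda\sum_{\mu}s_\mu$ with $\mu$ ranging over the partitions obtained from $\lambda$ by adding a single box, which is manifestly Schur-positive. To prove~\eqref{eq:main2}, I would first record that the elements of $AC_n$ are pairwise distinct (the position of the letter $n$ in $\pi c^k$ recovers $k$, and then $\pi$), so that
\[
\Q(AC_n)=\sum_{\pi\in A}\sum_{k=0}^{n-1}F_{n,\Des(\pi c^k)}.
\]
Viewing $\pi c^k$ as the left-cyclic-rotation by $k$ of the word $w=\pi(1)\cdots\pi(n-1)\,n$, a short computation shows $\Des(\pi c^k)=(\cDes(w)-k)\cap[n-1]$, where $\cDes(w)=\Des(\pi)\cup\{n\}\subseteq\bbz_n$ is the cyclic descent set of $w$ and subtraction is taken modulo $n$; in particular the inner sum depends on $\pi$ only through $\Des(\pi)$.

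It is worth noting at the outset that the naive per-permutation identity $\sum_k F_{n,\Des(\pi c^k)}=F_{n-1,\Des(\pi)}s_1$ is \emph{false} (already for $\pi=231$ the two sides disagree), so the symmetry of $\Q(A)$ must be used essentially. I would bring it in through Gessel's expansion $s_\lambda=\sum_{Q\in\SYT(\lambda)}F_{n-1,\Des(Q)}$: Schur-positivity of $A$ means exactly that the multiset $\{\Des(\pi):\pi\in A\}$ decomposes as $\biguplus_\lambda a_\lambda\{\Des(Q):Q\in\SYT(\lambda)\}$ over $\lambda\vdash n-1$. Since both sides of~\eqref{eq:main2} depend on $A$ only through this multiset, it suffices to prove, for each single shape $\lambda\vdash n-1$, the tableau identity
\[
\sum_{Q\in\SYT(\lambda)}\;\sum_{k=0}^{n-1}F_{n,(\cDes(Q)-k)\cap[n-1]}=s_\lambda\,s_1,\qquad\cDes(Q):=\Des(Q)\cup\{n\},
\]
which I have checked in small cases.

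To establish this identity I would model the right-hand side, writing $s_\lambda s_1=s_D=\sum_{R\in\SYT(D)}F_{n,\Des(R)}$, where $D$ is the disconnected skew shape consisting of $\lambda$ together with a single detached box; then $|\SYT(D)|=n\,f^\lambda$ matches the number of pairs $(Q,k)$ on the left. The heart of the argument is a descent-preserving bijection between these pairs and the tableaux $R\in\SYT(D)$, and this is where the $\bbz_n$-action and the \emph{jeu-de-taquin} straightening enter. For each $Q$ I would build an orbit of $n$ standard Young tableaux of suitable rotated skew shapes whose descent sets are precisely the rotations $(\cDes(Q)-k)\cap[n-1]$, with the action sending $k$ to $k+1$ and thereby rotating the cyclic descent set from $\cDes(Q)-k$ to $\cDes(Q)-k-1$; straightening each such skew tableau by a \jdt-type algorithm then carries it to a genuine standard Young tableau of a straight shape $\mu=\lambda+\square$ without changing its descent set, producing the bijection and, as a byproduct, the announced cyclic descent structure on these tableaux.

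The main obstacle, I expect, is exactly the construction and analysis of this cyclic action: one must prove that it is a genuine $\bbz_n$-action (iterating $n$ times returns to the start) and that it rotates the cyclic descent set, in analogy with Rhoades' theorem that promotion has order $n$ on rectangular tableaux. For general skew shapes promotion has neither property, so the delicate points are to single out exactly the family of rotated skew shapes on which the straightening is compatible with rotation, and to verify that the \jdt-type straightening commutes with the action and preserves descent sets along the whole orbit. By comparison, the remaining ingredients—the Pieri step, the distinctness bookkeeping, and the reduction to a single shape—are routine.
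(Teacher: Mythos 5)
Your reductions are sound, and they coincide with the paper's own: the elements of $AC_n$ are indeed distinct, $\Des(\pi c^{-k})$ depends only on $\Des(\pi)$ and $k$ (this is the content of Observation~\ref{obs1} and of the paper's Lemma~\ref{lem:QAC}), your warning that the naive per-permutation identity fails is correct (the paper's remark with $A=\{132\}$ makes the same point), and the passage from an arbitrary Schur-positive $A$ to a single shape $\lambda\vdash n-1$ via the decomposition of the descent multiset is exactly how the paper proceeds (it realizes each $s_\lambda$ by an explicit set $A_\lambda$ of inverse reading words, but your basis-of-fundamentals argument is an equivalent, slightly leaner, formulation). Your target identity $\sum_{Q\in\SYT(\lambda)}\sum_{k}F_{n,(\cDes(Q)-k)\cap[n-1]}=s_\lambda s_1$, with the disconnected shape $\lambda^\Box$ modeling the right-hand side, is precisely the identity the paper establishes.

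However, at exactly the point where the real work begins, your proposal stops: you never construct the straightening map, never prove it is a bijection, and never prove it preserves descent sets --- you explicitly defer all three as ``the main obstacle.'' These are not routine verifications; they constitute essentially all of Section~\ref{sec:proof}: the definition of the elementary step $(\ele)$, Lemma~\ref{lem:standard} (every step swaps a ``moving'' entry $<k$ with a ``non-moving'' one, and both restrictions remain standard), Lemma~\ref{lem:invertible} (bijectivity, proved by exhibiting an explicit inverse $\ijdt$ and showing it undoes the elementary steps one at a time), and Lemma~\ref{lem:jdt-preserves-Des} (descent preservation, proved by a path-crossing contradiction for descents among moving entries and by classical \emph{jeu-de-taquin} theory for the non-moving ones). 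Two further corrections to your outline. First, the ``orbit'' you want is simply $k+T$, where $T$ is $Q$ with an extra disconnected box containing $n$; that its cyclic descent set rotates is the easy Observation~\ref{obs2}, so nothing delicate happens there --- and note that the paper's straightening keeps the disconnected box (landing in $\SYT(\lambda^\Box)$ with box entry $k$), whereas you describe the image both as $\SYT(D)$ and as straight shapes $\mu=\lambda+\square$; these targets are different, and your proposal should commit to one. Second, your worry about proving that the action has order $n$ in analogy with Rhoades is misplaced: once $\jdt$ is known to be a bijection for each fixed $k$, the $\bbz_n$-action comes for free by conjugating addition of $1$ modulo $n$ (this is Proposition~\ref{prop:action}); what genuinely needs proof is bijectivity and descent-preservation of the straightening, and that is the gap.
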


In the statement of Theorem~\ref{thm:main}, the fact that $AC_n$
is Schur-positive  is an immediate consequence
Equation~\eqref{eq:main2}. Indeed, if $\Q(A)$ is Schur-positive,
then we obtain a non-negative expansion of $\Q(AC_n)$ in terms of
Schur functions using Pieri's rule~\cite[Theorem
7.15.7]{Stanley_ECII}, which states that for $\lambda\vdash n-1$,
$$s_\lambda s_1=\sum\limits_{\mu\vdash n \atop |\mu\setminus \lambda|=1} s_\mu.$$
Section~\ref{sec:proof} will be devoted to proving Equation~\eqref{eq:main2}.

An equivalent way to write Equation~\eqref{eq:main2} is
\begin{equation}\label{eq:main}
\Q(AC_n)\Q(\{\id\})=\Q(A)\Q(C_n),
\end{equation}
where $\id$ is the identity permutation in $\S_{n-1}$. To see this, note that $\Q(\{\id\})=s_{n-1}$ and $\Q(C_n)=s_n+s_{n-1,1}=s_1s_{n-1}$.

\begin{remark}
\begin{enumerate}
\item For arbitrary subsets $A\subseteq\S_{n-1}$,
Equation~\eqref{eq:main2} does not necessarily hold. For example,
if $n=4$ and $A=\{132\}\subset \S_3$, the left-hand side $\Q(AC_n)=2s_{2,2}$ is symmetric
and Schur-positive, but $\Q(A)s_1$ is not symmetric.

\item Vertical rotation, i.e., left multiplication of a
Schur-positive set $A\subseteq\S_{n-1}$ by $C_n$ does not
necessarily result in a Schur-positive set. For example, if
$A=\{3142, 1423\}\subset \S_4$, then $\Q(A)$ and $\Q(AC_5)$ are
Schur-positive, but $\Q(C_5 A)$ is not even symmetric.
\end{enumerate}
\end{remark}

\smallskip

We end this section with an equivalent formulation of the main theorems in terms of characters. Recall the Frobenius characteristic map, defined by \[
    \ch(\chi) \ = \ \frac{1}{n!} \, \sum_{\pi \in \S_n} \chi(\pi) p_\pi(x)
\]
where $\chi: \S_n \to \bbc$ is a class function, $p_\pi(x) =
p_\lambda (x)$ for every permutation $\pi$ of cycle type $\lambda
\vdash n$, and $p_\lambda(x)$ is a power sum symmetric function.

Using this terminology, Theorem~\ref{thm:main} is equivalent to
the following statement.

\begin{theorem}\label{thm:main_fine}
Let $\chi$ be an $\S_{n-1}$-character  and $A\subseteq \S_{n-1}$.
If $\Q(A)=\ch(\chi)$, then
\[
\Q(AC_n)=\ch(\chi\uparrow^{\S_n}).
\]
\end{theorem}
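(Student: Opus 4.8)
The plan is to deduce Theorem~\ref{thm:main_fine} from Theorem~\ref{thm:main} by translating through the Frobenius characteristic map, whose decisive feature here is that it converts induction from $\S_{n-1}$ to $\S_n$ into multiplication by $s_1$. Concretely, the only two external facts I would invoke are: (i) that $\ch$ sends genuine $\S_{n-1}$-characters to Schur-positive symmetric functions, and (ii) the intertwining identity $\ch(\chi\uparrow^{\S_n})=\ch(\chi)\,s_1$. Granting these, the statement reduces to a short chain of equalities anchored by Equation~\eqref{eq:main2}.

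To establish (ii), I would recall that $\ch$ is the isomorphism with $\ch(\chi^\lambda)=s_\lambda$ on irreducible characters. By the branching rule, inducing the irreducible $\chi^\lambda$ (with $\lambda\vdash n-1$) up to $\S_n$ yields $\sum_\mu \chi^\mu$, summed over those $\mu\vdash n$ obtained from $\lambda$ by adding a single box; applying $\ch$ and comparing with Pieri's rule $s_\lambda s_1=\sum_\mu s_\mu$ (already quoted after the statement of Theorem~\ref{thm:main}) gives $\ch(\chi^\lambda\uparrow^{\S_n})=s_\lambda s_1=\ch(\chi^\lambda)\,s_1$. Extending by linearity over all characters $\chi$ then yields (ii); see also \cite{Stanley_ECII}. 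For (i), writing a genuine character as $\chi=\sum_\lambda m_\lambda \chi^\lambda$ with integers $m_\lambda\ge 0$ gives $\ch(\chi)=\sum_\lambda m_\lambda s_\lambda$, which is manifestly Schur-positive.

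Finally I would assemble the argument. The hypothesis $\Q(A)=\ch(\chi)$ together with (i) shows that $\Q(A)$ is Schur-positive, so $A\subseteq\S_{n-1}$ is a Schur-positive set and Theorem~\ref{thm:main} applies; in particular Equation~\eqref{eq:main2} gives $\Q(AC_n)=\Q(A)\,s_1$. Combining this with the hypothesis and identity (ii),
\[
\Q(AC_n)=\Q(A)\,s_1=\ch(\chi)\,s_1=\ch(\chi\uparrow^{\S_n}),
\]
which is the desired conclusion. I expect the substantive difficulty to lie entirely inside Theorem~\ref{thm:main} (equivalently, inside Equation~\eqref{eq:main2}), which I am assuming; the reformulation itself is formal. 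The one point deserving care is that $\chi$ must be an honest character rather than a virtual one, since this is precisely what forces $A$ to be Schur-positive and thereby places us in the regime where Theorem~\ref{thm:main} is valid — for virtual $\chi$ the identity $\Q(AC_n)=\Q(A)\,s_1$ can fail, as the Remark following Theorem~\ref{thm:main} illustrates.
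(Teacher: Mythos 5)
Your main argument is correct and is essentially the paper's own route: the paper presents Theorem~\ref{thm:main_fine} as a direct reformulation of Theorem~\ref{thm:main}, and the two standard facts you isolate (Schur-positivity of $\ch$ of a genuine character, and $\ch(\chi\uparrow^{\S_n})=\ch(\chi)\,s_1$ via the branching rule and Pieri's rule) plus Equation~\eqref{eq:main2} are exactly what that reformulation rests on.

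One caveat about your closing remark: the claim that for virtual $\chi$ the identity $\Q(AC_n)=\Q(A)\,s_1$ can fail, ``as the Remark illustrates,'' is not right. In the Remark's example $\Q(A)$ is not symmetric, hence it is not $\ch(\chi)$ for any class function $\chi$, virtual or otherwise; that example lies outside the scope of Theorem~\ref{thm:main_fine} entirely. In fact, Equation~\eqref{eq:main2} holds for every \emph{symmetric} $A\subseteq\S_{n-1}$, with no positivity needed: by the proof of Lemma~\ref{lem:QAC}, $\Q(AC_n)$ depends linearly on the descent distribution of $A$, and if $\Q(A)=\sum_\lambda c_\lambda s_\lambda$ (with $c_\lambda$ possibly negative), then expanding in the basis of fundamental quasisymmetric functions shows the descent distribution of $A$ equals $\sum_\lambda c_\lambda\,\bigl(\text{descent distribution of }\SYT(\lambda)\bigr)$, whence $\Q(AC_n)=\sum_\lambda c_\lambda\,\Q(A_\lambda C_n)=\sum_\lambda c_\lambda s_\lambda s_1=\Q(A)\,s_1$. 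So genuineness of $\chi$ is a sufficient hypothesis, not the precise reason the identity holds; the statement remains true for virtual characters. This error is peripheral and does not affect the validity of your proof of the theorem as stated.
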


\section{Cyclic descents of rotated tableaux}
\label{sec:tools}

In this section we introduce some tools that will be used in the proof of Theorem~\ref{thm:main}.

\subsection{Standard Young tableaux and their rotations}
For $\lambda\vdash n$, denote by $\SYT(\lambda)$ the set of
standard Young tableaux (SYT for short) of shape $\lambda$, and define
$\SYT(\lambda/\mu)$ similarly for a skew shape $\lambda/\mu$. Let $s_\lambda$ and $s_{\lambda/\mu}$ denote the corresponding Schur functions. The descent set of a standard Young tableau $T$ is defined as
\begin{equation}\label{eq:DesT}
\Des(T):= \{ i: i+1 \text{ in a lower row than $i$ in $T$}\},
\end{equation}
where we use the English notation, in which row indices increase from top to bottom.

Recall the Robinson--Schensted correspondence, which associates to
each $\pi \in \S_n$ a pair $(P_\pi, Q_\pi)$ of standard Young
tableaux of the same shape $\lambda$, for some $\lambda \vdash n$.
The tableaux $P_\pi$ and $Q_\pi$ are called the {\em insertion}
and {\em recording} tableaux of $\pi$, respectively.
Inverting a permutation has the effect of switching the tableaux, namely,
$P_{\pi^{-1}}=Q_\pi$ and $Q_{\pi^{-1}}=P_\pi$ for all $\pi\in \S_n$.
The correspondence is a $\Des$-preserving bijection in the
following sense.

\begin{lemma}[{\cite[Lemma 7.23.1]{Stanley_ECII}}]\label{lem:RSKDes}
Let $\pi\in\S_n$. 
Then $\Des(\pi)=\Des(Q_\pi)$ and $\Des(\pi^{-1})=\Des(P_\pi)$.
\end{lemma}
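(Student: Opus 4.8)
The plan is to deduce the second equality from the first and to prove the first, $\Des(\pi)=\Des(Q_\pi)$, directly from the mechanics of Schensted insertion. For the reduction I would use the symmetry property recorded above, $P_{\pi^{-1}}=Q_\pi$ and $Q_{\pi^{-1}}=P_\pi$. Once $\Des(\sigma)=\Des(Q_\sigma)$ is known for every $\sigma\in\S_n$, applying it to $\sigma=\pi^{-1}$ gives $\Des(\pi^{-1})=\Des(Q_{\pi^{-1}})=\Des(P_\pi)$, which is the second equality. So it suffices to establish $\Des(\pi)=\Des(Q_\pi)$.

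Fix $i\in[n-1]$. The recording tableau $Q_\pi$ is built so that the cell labelled $i$ marks the new box created when $\pi(i)$ is inserted at step $i$, and the cell labelled $i+1$ marks the new box created when $\pi(i+1)$ is inserted at step $i+1$. Thus $i\in\Des(Q_\pi)$, i.e.\ $i+1$ lies in a strictly lower row than $i$, precisely when the box added at step $i+1$ sits in a strictly larger row index than the box added at step $i$. Since these two boxes arise from inserting $\pi(i)$ and then $\pi(i+1)$ into the tableau present after step $i-1$, and since $\pi$ has distinct values, the comparison between them depends only on whether $\pi(i)<\pi(i+1)$ or $\pi(i)>\pi(i+1)$, which I would read off from a row-bumping comparison.

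The key lemma I would prove is the following: if one performs two consecutive row insertions, first of $x$ and then of $x'$, producing new boxes $B$ and $B'$, then $x<x'$ forces $B'$ to lie weakly above $B$ (weakly smaller row index), while $x>x'$ forces $B'$ to lie strictly below $B$ (strictly larger row index). Granting this with $x=\pi(i)$ and $x'=\pi(i+1)$, the case $\pi(i)<\pi(i+1)$ gives $i\notin\Des(Q_\pi)$ and $i\notin\Des(\pi)$, while the case $\pi(i)>\pi(i+1)$ gives $i\in\Des(Q_\pi)$ and $i\in\Des(\pi)$; hence $\Des(\pi)=\Des(Q_\pi)$, as desired.

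The main obstacle is the row-bumping comparison itself. I would prove it by tracking the two bumping routes row by row: when $x<x'$, an induction on the row index shows that at each row the element displaced along the $x'$-route sits strictly to the right of the one displaced along the $x$-route, so the $x'$-route terminates weakly earlier, in a weakly higher row; when $x>x'$, the symmetric argument shows the $x'$-route stays weakly to the left and is pushed strictly lower, terminating in a strictly larger row index. The only care needed is the base case in the first row and the bookkeeping that a route which appends a box (rather than bumping one) still respects the inequalities; everything else is a routine induction on the number of rows.
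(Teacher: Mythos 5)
Your proof is correct. Note that the paper does not prove this lemma at all: it is quoted verbatim from Stanley (EC2, Lemma 7.23.1), so there is no internal argument to compare against. What you give is the standard proof of that cited result: the reduction of the second equality to the first via $P_{\pi^{-1}}=Q_\pi$, $Q_{\pi^{-1}}=P_\pi$ is exactly right, and your key lemma is the classical row bumping lemma (two successive insertions of $x$ then $x'$ create the new box $B'$ strictly right and weakly above $B$ when $x<x'$, and weakly left and strictly below when $x>x'$), whose strict/weak dichotomy matches precisely the descent definition $\Des(T)=\{i: i+1 \text{ in a lower row than } i\}$. The inductive row-by-row tracking you outline is the standard way to establish that lemma, so your argument is a complete and faithful substitute for the external citation.
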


The following is a well-known result of Gessel~\cite[Theorem 7.19.7]{Stanley_ECII}.

\begin{proposition}\label{prop:Gessel}
For every skew shape $\lambda/\mu$,
$$\sum_{T\in\SYT(\lambda/\mu)} F_{n,\Des(T)}=s_{\lambda/\mu}.$$
\end{proposition}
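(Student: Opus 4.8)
The plan is to derive this identity from the combinatorial definition of the skew Schur function as a generating function over semistandard Young tableaux (SSYT). Recall that $s_{\lambda/\mu} = \sum_{S} \x^{S}$, where the sum ranges over all SSYT $S$ of shape $\lambda/\mu$ (fillings of the cells with positive integers that weakly increase along rows and strictly increase down columns) and $\x^{S} = \prod_i x_i^{m_i(S)}$, with $m_i(S)$ the number of cells of $S$ containing $i$. The goal is to reorganize this sum by grouping the SSYT according to an underlying standard tableau, so that each group contributes exactly one fundamental quasisymmetric function $F_{n,\Des(T)}$.

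The central tool is the standardization map $S\mapsto \operatorname{std}(S)$, which relabels the entries of an SSYT $S$ by $1,2,\dots,n$ in weakly increasing order of value, breaking ties among cells of equal value by assigning smaller labels to cells in lower rows and, within a row, to cells further to the left. I would first check that $\operatorname{std}(S)\in\SYT(\lambda/\mu)$, and then prove the key bijective lemma: for a fixed $T\in\SYT(\lambda/\mu)$, the SSYT $S$ with $\operatorname{std}(S)=T$ are in weight-preserving bijection with the weakly increasing integer sequences $1\le i_1\le i_2\le\cdots\le i_n$ satisfying $i_j<i_{j+1}$ whenever $j\in\Des(T)$. The bijection sends such a sequence to the filling that places $i_j$ in the cell of $T$ labeled $j$, and this filling has weight $x_{i_1}x_{i_2}\cdots x_{i_n}$.

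The heart of the argument, and the step I expect to be the main obstacle, is verifying that this filling rule produces precisely the SSYT standardizing to $T$. The row condition (weak increase) is immediate from the monotonicity of the sequence, since horizontally adjacent cells carry labels $a<b$, hence values $i_a\le i_b$. The subtle point is the column-strict condition: if a cell labeled $a$ sits directly above a cell labeled $b$ in the same column, then $a<b$ and $b$ lies in a strictly lower row than $a$, so as the labels run through $a,a+1,\dots,b$ the row index must strictly increase at some step, forcing a descent $k\in\{a,\dots,b-1\}\cap\Des(T)$; the descent condition then yields $i_a\le i_k<i_{k+1}\le i_b$, hence the required strict inequality $i_a<i_b$. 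Conversely, one must confirm that the tie-breaking convention is chosen exactly so that two consecutive labels $j,j+1$ of equal value force $j\notin\Des(T)$ (since such a $j+1$ is read weakly above $j$), which simultaneously shows that the descent condition is necessary for a genuine SSYT and that standardization inverts the filling.

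Once this lemma is established, the proposition follows by summing over $T$:
\[
s_{\lambda/\mu}=\sum_{S}\x^{S}=\sum_{T\in\SYT(\lambda/\mu)}\ \sum_{\substack{1\le i_1\le\cdots\le i_n\\ i_j<i_{j+1}\text{ if }j\in\Des(T)}}x_{i_1}\cdots x_{i_n}=\sum_{T\in\SYT(\lambda/\mu)}F_{n,\Des(T)},
\]
where the inner sum is recognized as $F_{n,\Des(T)}$ directly from its defining expression. This completes the proof.
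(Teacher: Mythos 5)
Your proof is correct. Note that the paper does not prove this proposition at all: it is quoted as a known result of Gessel, with a citation to Theorem 7.19.7 of Stanley's \emph{Enumerative Combinatorics, Vol.~2}. Your standardization argument---grouping semistandard tableaux by their standardization and matching each fiber, via the tie-breaking convention that makes equal values with consecutive labels correspond exactly to non-descents, with the weakly increasing sequences defining $F_{n,\Des(T)}$---is essentially the standard proof of that cited theorem (there phrased in the language of $(P,\omega)$-partitions), so there is nothing to reconcile between your route and the paper's.
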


\medskip

\begin{definition}\label{ref:rotated}
A {\em rotated SYT} of size $n$ is a tableau on the letters $1,2,\dots,n$ where each letter appears exactly once, and entries are increasing along rows and columns with respect to the order
$$k+1<k+2<\dots<n<1<2<\dots<k,$$
for some $1\le k\le n$.
\end{definition}

In the rest of this paper, it will be convenient to consider the entries $1,2,\dots,n$ as elements of $\bbz_n$, so that $0$ is identified with $n$, and addition takes place modulo $n$.
For a SYT $T$ of size $n$ and an integer $k$, denote by
$k+T$ the tableau obtained by adding $k$ modulo $n$ to all its entries. Note that $R$ is a rotated tableau if and only if it is of the form $R=k+T$ for some $k$ and some SYT $T$.

\begin{example} The rotated tableau
$$R=\young(:::2,351,46)=2+\young(:::6,135,24)$$
is standard with respect to the order $3<4<5<6<1<2$.
\end{example}

\subsection{Cyclic descents of permutations and rotated SYT}

The {\em cyclic descent set} of a permutation was introduced by
Cellini~\cite{Cellini} and further studied in~\cite{Petersen,
Dilks}.

\begin{definition}\label{def_cyc_des_perm}
The {\em cyclic descent set} of $\pi\in \S_n$ is
$$\cDes(\pi)=\begin{cases} \Des(\pi) & \mbox{if }\pi(n)<\pi(1),\\
 \Des(\pi)\cup\{n\} & \mbox{if }\pi(n)>\pi(1).\end{cases}
$$
\end{definition}

Given a subset $D\subseteq [n]\simeq\bbz_n$, let $k+D=\{k+d : d\in D\}\subseteq[n]$, with addition modulo $n$.

\begin{observation}\label{obs1}
For every $\pi\in\S_n$ and $0\le k<n$,
\[
\cDes(\pi c^{-k})=k+\cDes(\pi).
\]
\end{observation}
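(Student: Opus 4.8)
The plan is to prove Observation~\ref{obs1} by a direct computation, unwinding the definition of cyclic descent set (Definition~\ref{def_cyc_des_perm}) for both $\pi$ and $\pi c^{-k}$ and comparing the two. First I would understand the combinatorial effect of right-multiplication by $c^{-k}$. Since $c=(1,2,\dots,n)$, right-multiplication by $c^{-1}$ acts on the one-line notation of $\pi$ by rotating its \emph{positions}: explicitly, $(\pi c^{-k})(i)=\pi(i+k)$ with indices read modulo $n$ (using the identification of $n$ with $0$). Thus the word $\sigma:=\pi c^{-k}$ is the cyclic shift of the word $\pi$ that starts reading at position $k+1$, namely $\sigma=\pi(k+1)\,\pi(k+2)\cdots\pi(n)\,\pi(1)\cdots\pi(k)$.

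Next I would compare descents position by position. An ordinary descent of $\sigma$ at position $i$ records whether $\sigma(i)>\sigma(i+1)$, i.e.\ whether $\pi(i+k)>\pi(i+k+1)$; this is exactly a descent of $\pi$ at position $i+k$, \emph{provided} neither of the two consecutive positions of $\sigma$ straddles the wrap-around point. The cyclic descent set is precisely the device that absorbs this wrap-around: the extra comparison $\sigma(n)$ versus $\sigma(1)$ — which governs whether $n\in\cDes(\sigma)$ — is the comparison $\pi(k)$ versus $\pi(k+1)$, i.e.\ a descent of $\pi$ at position $k$. The key insight is that, once one passes from $\Des$ to $\cDes$, every one of the $n$ cyclically consecutive comparisons of $\sigma$ corresponds to exactly one cyclically consecutive comparison of $\pi$, shifted by $k$ modulo $n$. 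I would make this precise by checking that $j\in\cDes(\sigma)$ if and only if $j-k\in\cDes(\pi)$ (indices mod $n$) across all $n$ residues, treating the index $n$ of $\cDes$ as the element $0$ of $\bbz_n$ so that the shift is genuinely a shift in $\bbz_n$. Assembling these $n$ equivalences gives $\cDes(\sigma)=k+\cDes(\pi)$.

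The main obstacle I expect is purely bookkeeping: handling the wrap-around carefully so that the single exceptional comparison (the one defining membership of $n$ in $\cDes$) is correctly matched with the descent of $\pi$ at position $k$, and verifying that the labeling of positions by $\bbz_n$ is consistent at the boundary between positions $n$ and $1$. A clean way to avoid case analysis is to reformulate $\cDes$ cyclically from the outset: define, for a word $w=w_1 w_2\cdots w_n$, the set of indices $j\in\bbz_n$ at which $w_j>w_{j+1}$ with subscripts taken modulo $n$ (so $w_{n+1}:=w_1$); this set coincides with $\cDes(\pi)$ under the identification $[n]\simeq\bbz_n$, since the comparison at $j=n$ is exactly $\pi(n)$ versus $\pi(1)$. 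With this symmetric description, the statement becomes manifestly a statement about relabeling the cyclic positions: replacing $\pi$ by $\pi c^{-k}$ shifts each cyclic position by $k$, and hence shifts the cyclic descent set by $k$, which is the claim.
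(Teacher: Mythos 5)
Your overall strategy is the natural one---and it is surely what the authors had in mind, since the paper states this as an Observation with no proof: reformulate $\cDes$ cyclically as the set of positions $j\in\bbz_n$ with $w_j>w_{j+1}$ (indices modulo $n$), note that right multiplication by a power of $c$ relabels the cyclic positions, and conclude that $\cDes$ is shifted. However, your execution contains a sign error that makes the argument internally inconsistent and, as written, proves the opposite shift. Under the paper's conventions one has $c^{-k}(i)=i-k \pmod n$ and $\pi c^{-k}=\pi\circ c^{-k}$, so $(\pi c^{-k})(i)=\pi(i-k)$, \emph{not} $\pi(i+k)$; equivalently, $\pi c^{-k}$ moves the \emph{last} $k$ letters of $\pi$ to the front: $\pi c^{-k}=\pi(n-k+1)\cdots\pi(n)\,\pi(1)\cdots\pi(n-k)$. (The composition convention is pinned down by the paper's displayed description of $AC_n$, which shows $(\pi c^{k})(i)=\pi(i+k)$, and by the sentence in Section~4 asserting that the reading word of $k+T$ is $c^k\pi^{-1}$; your formula describes $\pi c^{+k}$.)

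The inconsistency then surfaces in your second paragraph. With your formula $\sigma(i)=\pi(i+k)$, your own position-by-position comparison gives $j\in\cDes(\sigma)$ if and only if $j+k\in\cDes(\pi)$, i.e.\ $\cDes(\pi c^{-k})=-k+\cDes(\pi)$; indeed your wrap-around analysis (``$\sigma(n)$ vs.\ $\sigma(1)$ is $\pi(k)$ vs.\ $\pi(k+1)$'') says exactly that $n\in\cDes(\sigma)$ iff $k\in\cDes(\pi)$, again a shift by $-k$. You then assert ``$j\in\cDes(\sigma)$ if and only if $j-k\in\cDes(\pi)$,'' which is what the Observation requires but contradicts the sentence before it. A concrete check: for $n=3$, $\pi=213$, $k=1$, one has $\cDes(\pi)=\{1,3\}$ and $\pi c^{-1}=321$, so $\cDes(\pi c^{-1})=\{1,2\}=1+\cDes(\pi)$ as claimed; whereas your word $\pi(2)\pi(3)\pi(1)=132$ has $\cDes(132)=\{2,3\}=-1+\cDes(\pi)$. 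The repair is one line: with the correct formula $\sigma(i)=\pi(i-k)$ and the cyclic reformulation of $\cDes$, one gets $j\in\cDes(\sigma)$ iff $\pi(j-k)>\pi(j-k+1)$ iff $j-k\in\cDes(\pi)$, i.e.\ $\cDes(\pi c^{-k})=k+\cDes(\pi)$.
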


A notion of cyclic descents for standard Young tableaux of
rectangular shapes was introduced by Rhoades~\cite{Rhoades}, see
also~\cite{PS}.
As in the case of permutations, the cyclic descent set respects a natural $\bbz_n$-action on the set of SYT of a given rectangular shape. This action, which was
horizontal rotation in the case of permutations (Observation~\ref{obs1}), is Sch\"utzenberger's promotion operation in the case of rectangular SYT. Also, in both cases, the cyclic descent set restricts to the regular descent set when the letter $n$ is ignored.

\medskip

The next definition, where again we identify $[n]\simeq\bbz_n$, extends this concept to rotated SYT.
For further discussion, see Section~\ref{sec:cDes}.

\begin{definition}\label{def_cyc_des_tab}
Let $R$ be a rotated SYT of size $n$. Define 
\begin{align*}
\cDes_\rot(R)&:=\{i\in [n]: i+1  \mbox{ is in a lower row than $i$ in $R$}\},\\
\Des(R)&:=\cDes_\rot(R)\cap[n-1].
\end{align*}
\end{definition}

\begin{remark}\label{rem:reading_word}
The reading word of a SYT $T$ is the permutation obtained by
reading the rows of $T$ from left to right and from bottom to top.
As an alternative to Definition~\ref{def_cyc_des_tab}, we could
have defined the cyclic descent set of a rotated SYT to be the
cyclic descent set of the inverse of the reading word of $T$. This is
equivalent to
\[
\cDes_\rot'(T):=\{i\in [n]: i+1  \mbox{ is strictly south of } i \mbox{
or in the same row and west of } i\},
\]

Figure~\ref{fig:cDes} shows a picture of the regions where $i+1$
has to be, relative to the location of $i$, for $i$ to be a cyclic
descent in these two definitions. Even though $\cDes_\rot$ and
$\cDes_\rot'$ do not coincide on rotated SYT in general, it can be
checked that they coincide on the kind of rotated SYT considered
in Section~\ref{sec:proof}. 
\end{remark}

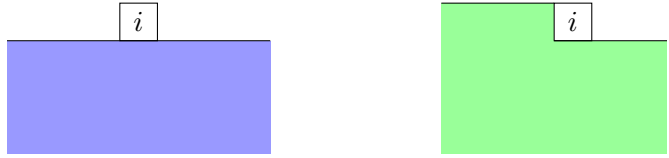
\begin{figure}[htb]
\centering
\begin{tikzpicture}[scale=.5]
\fill[blue!40!white] (-3,0) rectangle (4,-3);
\draw (0,0) rectangle (1,1);
\draw (0.5,0.5) node {$i$};
\draw (-3,0)--(4,0);
\end{tikzpicture}
\hspace{20mm}
\begin{tikzpicture}[scale=.5]
\fill[green!40!white] (0,1) rectangle (-3,-3);
\fill[green!40!white] (0,0) rectangle (3,-3); \draw (0,0)
rectangle (1,1); \draw (0.5,0.5) node {$i$}; \draw (-3,1)--(0,1);
\draw (0,0)--(3,0);
\end{tikzpicture}
\caption{Left: $i\in\cDes_\rot(T)$ if and only if $i+1$ is in the blue region. Right: $i\in\cDes_\rot'(T)$ if and only if $i+1$ is in the green region.}
\label{fig:cDes}
\end{figure}

\begin{observation}\label{obs2}
For every SYT $T$ of size $n$ and integer $k$
\[
\cDes_\rot(k+T)=k+\cDes_\rot(T).
\]
\end{observation}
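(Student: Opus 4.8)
The plan is to prove the identity by directly unwinding Definition~\ref{def_cyc_des_tab} and exploiting the fact that forming $k+T$ merely relabels the entries of $T$ via the translation $j\mapsto j+k$ on $\bbz_n$, while leaving the underlying shape and the positions of all cells untouched. Since the rows of $k+T$ are literally the same cells as the rows of $T$, the notion of ``lower row'' is unchanged; only the values occupying the cells are permuted.

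First I would record the position bookkeeping. For each $i\in[n]\simeq\bbz_n$, the cell of $k+T$ carrying the value $i$ is exactly the cell of $T$ carrying the value $i-k$ (computed in $\bbz_n$), because the entry in any cell of $k+T$ is obtained from the entry of $T$ in that cell by adding $k$ modulo $n$. Applying this to both $i$ and its cyclic successor $i+1$, the cell of $i+1$ in $k+T$ coincides with the cell of $(i-k)+1$ in $T$; here I use that addition by $k$ commutes with addition by $1$ in $\bbz_n$, so the successor of $i-k$ is precisely $(i+1)-k$.

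With these two facts in hand the argument closes at once: fixing $i$ and writing $j=i-k$, the value $i+1$ sits in a lower row than $i$ in $k+T$ precisely when $j+1$ sits in a lower row than $j$ in $T$. By Definition~\ref{def_cyc_des_tab} this says $i\in\cDes_\rot(k+T)$ if and only if $j=i-k\in\cDes_\rot(T)$, which is exactly the assertion $\cDes_\rot(k+T)=k+\cDes_\rot(T)$.

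The only point demanding care, and the single potential obstacle, is the modular wraparound at $i=n$, where $i+1$ is interpreted as $1$. This is handled uniformly by working throughout in $\bbz_n$ under the identification $[n]\simeq\bbz_n$ fixed in the paper, so that the translation $+k$ and the successor map $+1$ are both honest operations on $\bbz_n$ and genuinely commute; no separate boundary case then arises. I expect nothing else to be nontrivial, so the entire proof reduces to this commutation observation.
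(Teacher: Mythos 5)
Your proof is correct: the paper states this as an Observation with no written proof, and your argument---that adding $k$ modulo $n$ only relabels the entries in place, and that the successor map on $\bbz_n$ commutes with this relabeling---is exactly the verification the paper implicitly relies on. The careful handling of the wraparound case $i=n$ via the identification $[n]\simeq\bbz_n$ is the right way to make it airtight.
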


\begin{example}\label{ex:1} For the tableaux
$$T=\young(:::6,135,24), \qquad 2+T=\young(:::2,351,46), \qquad 3+T=\young(:::3,462,51),
$$
we have $\cDes_\rot(T)=\{1,3,6\}$, $\cDes_\rot(2+T)= \{3,5,2\}= 2+\cDes_\rot(T)$, $\cDes_\rot(3+T)=\{4,6,3\}=3+\cDes_\rot(T)$.
\end{example}

\section{Proof of Theorem~\ref{thm:main}} \label{sec:proof}

In this section we will prove Equation~\eqref{eq:main2}. We start by showing that $\Q(AC_n)$ is completely determined by $\Q(A)$.

\begin{lemma}\label{lem:QAC}
Let $A$ and $A'$ be multisets of $\S_{n-1}$, and suppose that $\Q(A)=\Q(A')$. Then $\Q(AC_n)=\Q(A'C_n)$.
\end{lemma}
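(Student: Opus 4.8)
The plan is to show that $\Q(AC_n)$ is obtained from $\Q(A)$ by applying a single fixed linear operator, so that equal inputs necessarily produce equal outputs. First I would reduce to the one-permutation case: since $\Q$ is additive over (multi)sets, $\Q(AC_n)=\sum_{\pi\in A}\Q(\{\pi\}C_n)$, the sum running over $A$ with multiplicity. It then suffices to prove the key claim that, for $\pi\in\S_{n-1}$, the quasisymmetric function $\Q(\{\pi\}C_n)$ depends only on $\Des(\pi)$.

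To establish the claim I would pass to cyclic descent sets. Embedding $\pi$ in $\S_n$ so that it fixes $n$, we have $\pi(n)=n>\pi(1)$, so by Definition~\ref{def_cyc_des_perm} $\cDes(\pi)=\Des(\pi)\cup\{n\}$, where $\Des(\pi)\subseteq[n-2]$ is the descent set of $\pi$ as an element of $\S_{n-1}$. The horizontal rotations $\{\pi c^k:0\le k<n\}$ are governed by Observation~\ref{obs1}, which gives $\cDes(\pi c^{-k})=k+\cDes(\pi)$; hence, as $k$ runs over a complete set of residues modulo $n$, the sets $\cDes(\pi c^k)$ run exactly once through the $n$ cyclic rotations $\{j+\cDes(\pi):0\le j<n\}$. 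Since for any $\sigma\in\S_n$ the letter $n$ is never an ordinary descent, one checks that $\Des(\sigma)=\cDes(\sigma)\cap[n-1]$, and therefore
$$\Q(\{\pi\}C_n)=\sum_{k=0}^{n-1}F_{n,\Des(\pi c^k)}=\sum_{j=0}^{n-1}F_{n,\,(j+\cDes(\pi))\cap[n-1]}.$$
The right-hand side is manifestly a function of $\cDes(\pi)$ alone, hence of $\Des(\pi)$ alone, which proves the claim.

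Finally I would combine the claim with the linear independence of fundamental quasisymmetric functions. The functions $F_{n-1,D}$ for $D\subseteq[n-2]$ are linearly independent, so the hypothesis $\Q(A)=\Q(A')$ forces $m_D:=\#\{\pi\in A:\Des(\pi)=D\}$ to equal $m_D':=\#\{\pi\in A':\Des(\pi)=D\}$ for every $D$ (counted with multiplicity). Writing $\Phi(D)$ for the common value $\Q(\{\pi\}C_n)$ over all $\pi$ with $\Des(\pi)=D$, we get
$$\Q(AC_n)=\sum_D m_D\,\Phi(D)=\sum_D m_D'\,\Phi(D)=\Q(A'C_n).$$
The only substantive step is the key claim; the rest is bookkeeping. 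I expect the point to get right to be the interplay between the ordinary and cyclic descent sets under the embedding $\S_{n-1}\hookrightarrow\S_n$ and the rotation action, in particular verifying both that restricting $\cDes$ to $[n-1]$ recovers $\Des$ and that the rotation orbit of $\cDes(\pi)$ is traversed exactly once, which is precisely where Observation~\ref{obs1} does the work.
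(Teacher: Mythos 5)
Your proof is correct and follows essentially the same route as the paper's: both arguments rest on the linear independence of the fundamental quasisymmetric functions (so that $\Q(A)$ determines the multiset of descent sets of $A$, and $\Q(AC_n)$ is then a fixed linear function of that data) together with the fact that $\Des(\pi c^{-k})$ depends only on $\Des(\pi)$ and $k$. The only cosmetic difference is that you obtain this last fact from Observation~\ref{obs1} via cyclic descents, writing $\Des(\pi c^{-k})=\bigl(k+\cDes(\pi)\bigr)\cap[n-1]$, whereas the paper states the equivalent formula $\Des(\sigma c^{-k})=(k+\Des(\sigma))\setminus\{n\}\cup\{k\}$ directly.
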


\begin{proof}
First, note that $\Q(A)=\Q(A')$ if and only if the distribution of $\Des$ is the same on $A$ as in $A'$. Indeed, the ``if" direction is trivial by definition, and the converse holds because the set
$\{F_{n,D}(\x):D\subseteq[n-1]\}$ of fundamental symmetric functions forms a basis of the vector space of homogeneous quasisymmetric functions of degree $n$ (see e.g.~\cite[Prop. 7.19.1]{Stanley_ECII}).

It now remains to show that the distribution of $\Des$ on $AC_n$ is completely determined by the distribution of $\Des$ on $A$. To see this, note that for $\sigma\in\S_{n-1}$, the descent set of a rotation
$\sigma c^{-k}$ depends only on the descent set of $\sigma$ and on $k$,
namely,
$$\Des(\sigma c^{-k})=(k+\Des(\sigma))\setminus\{n\}\cup\{k\}.$$

It follows that $\Q(AC_n)$ is completely determined by $\Q(A)$, and so $\Q(A)=\Q(A')$ implies $\Q(AC_n)=\Q(A'C_n)$.
\end{proof}

Now let $A\subseteq\S_{n-1}$ be a Schur-positive set. This means that we can write $\Q(A)=\sum_{\lambda\vdash n-1} c_\lambda s_\lambda$ for some coefficients $c_\lambda\ge0$.
Let $A'$ be the multiset consisting of the union of $c_\lambda$ copies of $A_\lambda$ for each $\lambda\vdash n-1$, where $A_\lambda\subseteq\S_{n-1}$ is any set satisfying $\Q(A_\lambda)=s_\lambda$. Then $\Q(A)=\Q(A')$, and so $\Q(AC_n)=\Q(A'C_n)$ by Lemma~\ref{lem:QAC}.
We will construct appropriate sets $A_\lambda$ and show that
$\Q(A_\lambda C_n)=s_\lambda s_1$ for all $\lambda$. It will then follow that
$$\Q(AC_n)=\Q(A'C_n)=\sum_\lambda c_\lambda Q(A_\lambda C_n)=\sum_\lambda c_\lambda Q(A_\lambda) s_1=\Q(A)s_1,$$ proving
Equation~\eqref{eq:main2}.

Fix $\lambda\vdash n-1$. Given $Q\in\SYT(\lambda)$, let
$\sigma^{-1}$ be its reading word, as described in
Remark~\ref{rem:reading_word}. It is easy to verify that $Q$ is
the insertion tableau of $\sigma^{-1}$ under RSK, thus the recording tableau of $\sigma$, hence
$\Des(\sigma)=\Des(Q)$ by Lemma~\ref{lem:RSKDes}.

Let us call $\sigma$ the {\em inverse reading word} of $Q$. Let
$A_\lambda$ be the set of all the permutations obtained as inverse
reading words of tableaux in $\SYT(\lambda)$. The map $Q\mapsto
\sigma$ that sends each tableau to its inverse reading word is
thus a $\Des$-preserving bijection from $\SYT(\lambda)$ to
$A_\lambda$. It follows that
$$\Q(A_\lambda)=\sum_{\sigma\in A_\lambda} F_{n,\Des(\sigma)}=\sum_{Q\in\SYT(\lambda)} F_{n,\Des(Q)}=s_\lambda,$$
using Proposition~\ref{prop:Gessel}.

Let $\lambda^\Box$ be the skew shape obtained from the Young
diagram of shape $\lambda$ by placing a disconnected box at its
upper right corner. For example, the tableaux in
Example~\ref{ex:1} have shape $(3,2)^\Box$. If
$P\in\SYT(\lambda^\Box)$, denote by $\delta(P)$ the entry it the
upper right box. Each $Q\in\SYT(\lambda)$ naturally corresponds to
a tableau $T\in\SYT(\lambda^\Box)$ with $\delta(T)=n$, obtained by
adding a box in the upper right corner with the entry $n$. If the
inverse reading word of $Q$ is $\sigma\in A_\lambda\subset
\S_{n-1}$, then the inverse reading word of $T$ is $\pi=\sigma n$,
the permutation obtained from $\sigma$ by adding $n$ as last
letter, which is an element of $A_\lambda$ when viewed as a subset
of $\S_n$. Additionally, for every $0\le k<n$, the inverse reading
word of $k+T$ is $\pi c^{-k}$, since its reading word
$c^k\pi^{-1}$ is obtained by adding $k$ to each entry of the
reading word $\pi^{-1}$ of $T$. Thus, the map $k+T\mapsto \pi
c^{-k}$ is a bijection from
$\{k+T:T\in\SYT(\lambda^\Box),\delta(T)=n\}$ to $\{\pi
c^{-k}:\pi\in A_\lambda\}$. Denote by
$$\varphi:\, \{\pi
c^{-k}:\pi\in A_\lambda\} \longrightarrow
\{k+T:T\in\SYT(\lambda^\Box),\delta(T)=n\}
$$
its inverse map,
and note that, for $\tau\in A_\lambda C_n$, $\varphi(\tau)$ is the
tableaux of shape $\lambda^\Box$ whose reading word is
$\tau^{-1}$.

\begin{example}
The reading word of the tableau $T$ in Example~\ref{ex:1} is $241356$, and so its inverse reading word is $\pi=314256$.
The reading word of $3+T$ is $514623$, and its inverse reading word is $\pi c^{-3}=256314$. In particular, $$\varphi(256314)=\young(:::3,462,51).$$
\end{example}

Clearly, for every $T\in \SYT(\lambda^\Box)$ with $\delta(T)=n$,
$\cDes_\rot(T)=\Des(T)\cup\{n\}=\Des(\pi)\cup\{n\}=\cDes(\pi)$.
Observations~\ref{obs2} and~\ref{obs1} imply now that, for every
$0\le k<n$,
$$\cDes_\rot(k+T)=k+\cDes_\rot(T)=k+\cDes(\pi)=\cDes(\pi c^{-k}).$$
In particular
\begin{equation}\label{eq4}
\Des(k+T)=\Des(\pi c^{-k}),
\end{equation}
and so $\varphi$ is a $\Des$-preserving bijection.

\medskip

In the rest of this section we will describe another $\Des$-preserving bijection
\begin{equation}\label{eq:bij}
\jdt:\, \{k+T:T\in\SYT(\lambda^\Box),\delta(T)=n\} \longrightarrow \{P\in\SYT(\lambda^\Box) : \delta(P)=k\}.
\end{equation}
Considering the composition
$$\{\pi c^{-k}:\pi\in A_\lambda\}\overset{\varphi}{\to}\{k+T:T\in\SYT(\lambda^\Box),\delta(T)=n\}\overset{\jdt}{\to}\{P\in\SYT(\lambda^\Box) : \delta(P)=k\}$$
and taking the union over $k$, we will obtain a $\Des$-preserving bijection between $A_\lambda C_n$ and $\SYT(\lambda^\Box)$, from where it will follow that
$$\Q(A_\lambda C_n)=\sum_{k=0}^{n-1} \sum_{\pi\in A_\lambda} F_{n,\Des(\pi c^{-k})}= \sum_{k=1}^{n} \sum_{P\in\SYT(\lambda^\Box)\atop \delta(P)=k} F_{n,\Des(P)}=\sum_{P\in\SYT(\lambda^\Box)} F_{n,\Des(P)}=s_{\lambda^\Box}=s_\lambda s_1,$$
using Proposition~\ref{prop:Gessel}, and the fact that the Schur
function indexed by the skew shape obtained by placing a partition
$\nu$ above and to the right of $\lambda$ is equal to
$s_{\lambda}s_{\nu}$.

\medskip

Fix  $0\le k<n$. Let $T\in\SYT(\lambda^\Box)$ with $\delta(T)=n$. Define $\jdt(k+T)$ to be the result of straightening the tableau $k+T$ by applying the following procedure, based on {\it jeu-de-taquin}. Initialize by setting $T_0=k+T$, and repeat the following step ---which we call {\em elementary step}--- until $T_0$ is a standard tableau (that is, its rows and columns are increasing):
\begin{itemize}
\item[($\ele$)]  Let $i$ be the minimal entry in $T_0$ for which the entry immediately above or to the left of it is larger than $i$ (such an entry exists because otherwise $T_0$ would be standard). Switch $i$ with the larger of these two entries, and let $T_0$ be the resulting tableau.
\end{itemize}
Let $\jdt(k+T)$ be the resulting standard tableau.

\begin{example}\label{ex:2}
For $k=3$, letting $T$ be the tableau $T$ in Example~\ref{ex:1}, we compute $\jdt(3+T)$ as follows:
\[
3+T=\young(:::3,462,51) \overset{\ele}{\mapsto} \young(:::3,412,56) \overset{\ele}{\mapsto}
\young(:::3,142,56) \overset{\ele}{\mapsto} \young(:::3,124,56)=\jdt(3+T).
\]
\end{example}

In Lemmas~\ref{lem:invertible} and~\ref{lem:jdt-preserves-Des} we will show that $\jdt$ is a $\Des$-preserving bijection as described in Equation~\eqref{eq:bij}. First, we introduce some terminology that will be used in the proofs.

An entry in a tableau that is smaller than an entry immediately above or to the left of it will be called {\em short}. Similarly, an entry that is larger than an entry below or to the right of it will be called {\em tall}.

In any tableau obtained along the process that takes $k+T$ to $P:=\jdt(k+T)$, the entries $1,2,\dots,k-1$ will be called the {\em moving} entries, while the entries $k+1,k+2,\dots,n$ will be called {\em non-moving}.
The moving entries in $k+T$ are the only ones that may be short, since, relative to the corresponding entries in the standard tableau $T$, they have decreased by $n-k$, while the remaining entries have increased by $k$.

If a tableau has the property that it contains no two moving
entries where the smaller one is weakly south and east of the
larger one, we say that this tableau restricted to the moving
entries is {\em standard}. Similarly for non-moving entries.

\begin{lemma}\label{lem:standard}
Let $Q$ be a tableau obtained in an intermediate step of the $\jdt$ process applied to $k+T$. Then $Q$ restricted to the moving entries $1,2,\dots,k-1$ is standard, and so is $Q$ restricted to the non-moving entries $k+1,k+2,\dots,n$.

Additionally, every elementary step performed by $\jdt$ when applied to $k+T$ switches one moving entry and one non-moving entry.
\end{lemma}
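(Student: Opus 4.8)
The plan is to prove both assertions simultaneously, by induction on the elementary steps of the $\jdt$ process applied to $k+T$. For the base case, observe that adding $k$ modulo $n$ carries the entries $n-k+1,\dots,n-1$ of $T$ (the source of the moving values $1,\dots,k-1$) and the entries $1,\dots,n-k$ of $T$ (the source of the non-moving values $k+1,\dots,n$) to their images while preserving the relative order inside each of these two blocks. Since $T$ is standard, each block is standard in $T$, so its image is standard in $k+T$; thus the restrictions of $k+T$ to the moving and to the non-moving entries are both standard. The entry $k$, which is the image of $\delta(T)=n$, sits in the disconnected corner box, which has no cell below it or to its right, so it is never short, never serves as the above or left neighbor of another cell, and never moves.

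Now assume that before a given elementary step both restrictions are standard, and let $i$ be the chosen minimal short entry. First I would check that $i$ is moving and that the neighbor $j$ it is switched with is non-moving; this is what yields the \emph{Additionally} clause. Indeed, a non-moving entry cannot be short: any entry immediately above or to its left is either non-moving, hence smaller by the inductive standardness of the non-moving restriction, or has value at most $k$, hence smaller than the non-moving value in question. So the minimal short entry $i$ must be moving. The neighbor $j$ satisfies $j>i$ and lies immediately above or to the left of $i$; were $j$ moving, then $i<j$ with $i$ weakly south-east of $j$ would violate the standardness of the moving restriction, and $j$ cannot be the corner entry $k$, so $j$ is non-moving. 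A tool I would record here, used repeatedly below, is that minimality of $i$ forces the set $\mu$ of cells carrying entries smaller than $i$ to be an order ideal (a Young diagram inside the ambient shape) on which the filling is standard: if $x<i$, then $x$ is not short, so its above and left neighbors are smaller than $x$ and hence also less than $i$.

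To see that the moving restriction stays standard after the switch, note that only $i$ has moved, to the cell formerly holding $j$; this cell lies weakly north-west of $i$'s old cell and, carrying the value $j>i$, does not belong to $\mu$. Since $\mu$ is an order ideal, no cell weakly south-east of $i$'s new cell lies in $\mu$, so no entry smaller than $i$ — in particular no smaller moving entry — is weakly south-east of $i$ after the step. Conversely, $i$ cannot have become weakly south-east of a larger moving entry, since $i$'s old cell was already weakly south-east of its new cell, so such a relation would have existed before the step, contradicting the inductive hypothesis. Hence the moving restriction remains standard.

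The delicate part, and the step I expect to be the main obstacle, is the non-moving restriction, because the algorithm selects a \emph{minimal} short entry but imposes no matching maximality condition, so there is no automatic ``dual order ideal'' to exploit. After the switch only $j$ has moved, to $i$'s old cell, which is weakly south-east of its former position. A putative violation pairs $j$ with a non-moving entry. The case where a smaller non-moving entry becomes weakly south-east of $j$ is immediate: such an entry would already have been weakly south-east of $j$'s old cell, contradicting the inductive hypothesis. The remaining case, in which $j$ ends up weakly south-east of a larger non-moving entry $b$, is the crux. Treating the step where $j$ sits directly above $i$ (the other case is the transpose), $b$ would lie in the row of $i$, strictly west of it, with $b>j$. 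Here I would trace rightward along that row from $b$ toward $i$: by the inductive standardness of the non-moving restriction, each non-moving entry encountered exceeds its non-moving predecessor and so exceeds $j$. Hence the entry $\ell$ immediately to the left of $i$, if non-moving, would satisfy $\ell>j$; but $j$ is the larger of $i$'s two neighbors, so $\ell\le j$, a contradiction. Thus $\ell$ is moving, and $\ell>i$ (otherwise the whole row-prefix up to $i$, including $b$'s cell, would lie in $\mu$, forcing $b<i$). Then $i$ and $\ell$ are moving with $i<\ell$ and $i$ weakly south-east of $\ell$, violating the standardness of the moving restriction established above. This contradiction rules out the bad case and closes the induction. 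The essential point, and where the argument must be arranged carefully, is that preserving non-moving standardness is \emph{not} self-contained: it relies on moving standardness through this row/column-tracing bridge, so the two invariants must be carried together.
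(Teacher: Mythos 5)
Your proof is correct, and its skeleton matches the paper's: induction over the elementary steps carrying both invariants simultaneously, the same base case, and the same identification that the minimal short entry $i$ must be moving while the neighbor $j$ it is switched with must be non-moving (which is exactly how the paper gets the \emph{Additionally} clause). The genuine difference is in how preservation of the two invariants is argued. For the moving entries, your order-ideal $\mu$ is a fleshed-out version of the paper's terse remark that the entries $1,2,\dots,i-1$ form a left-justified standard Young tableau. For the non-moving entries, the paper disposes of the issue in one line: an elementary step acts on the non-moving entries exactly as one exchange of a classical \emph{jeu-de-taquin} slide in the south-east direction (with $i$ playing the role of the hole), and such slides preserve standardness. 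You instead give a self-contained case analysis: the only possible violation pairs $j$ with a non-moving $b>j$, you pin $b$ down to the row (resp.\ column) of $i$, and you eliminate it via the neighbor $\ell$ of $i$, using the ``switch with the larger neighbor'' rule, the order ideal $\mu$, and the moving-standardness invariant. Your route is more elementary and makes explicit that the two invariants genuinely interlock; the paper's route is shorter, and the jdt-slide identification it sets up does double duty, since it is reused in the proof of Lemma~\ref{lem:jdt-preserves-Des} (case $i>k$), where Doran's result that \emph{jeu-de-taquin} preserves descent sets is invoked. One cosmetic slip: in your final sub-case ($\ell$ moving with $\ell>i$), the violating configuration --- $i$ immediately east of $\ell$ in the same row --- is the \emph{pre-step} one, so the contradiction is with the inductive hypothesis, not with the post-step moving standardness ``established above''; after the step, $i$ and $\ell$ occupy incomparable cells and there is no violation to point to.
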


\begin{proof}
The first part of the lemma is clear when $Q=k+T$, since $T$ is standard and the relative order of the moving entries (resp. the non-moving entries) does not change when adding $k$ modulo~$n$.

Now let $Q$ be an intermediate tableau in the $\jdt$ process applied to $k+T$, and suppose that the restrictions of $Q$ to the moving entries and to the non-moving entries are standard. We will show that these properties are preserved when applying an elementary step to $Q$, and that this step switches a moving entry with a non-moving entry.

Let $i$ be the smallest short entry of $Q$, and note that by the
assumptions on $Q$ it has to be a moving entry. Then $\jdt$
switches $i$ with some larger entry $j$ above or to the left of
$i$. Since $Q$ restricted to moving entries is standard, the
location of $j$ implies that $j$ is a non-moving entry.

By definition of the $\jdt$ map, the entries $1,2,\dots,i-1$ in $Q$ form a left-justified standard Young tableau (otherwise $Q$ would have had a short entry smaller than $i$). Thus, after switching $i$ and $j$ in $Q'$, the resulting tableau $Q'$ still has the property of being standard when restricted to the moving entries.

To see that $Q'$ restricted to the non-moving entries is standard as well, note that the effect of an elementary step on the restriction of $Q$ to the non-moving entries is equivalent to a classical {\em jeu-de-taquin} slide in the south-east direction
(see~\cite[Appendix A1.2]{Stanley_ECII}), which preserves the standard property.
\end{proof}

\begin{lemma}\label{lem:invertible}
The map $\jdt$ is a bijection between $\{k+T:T\in\SYT(\lambda^\Box),\delta(T)=n\}$ and $\{P\in\SYT(\lambda^\Box) : \delta(P)=k\}$.
\end{lemma}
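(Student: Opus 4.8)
The plan is to prove that $\jdt$ is a bijection by exhibiting an explicit inverse, rather than merely counting. The key structural fact, already established in Lemma~\ref{lem:standard}, is that each elementary step swaps exactly one moving entry (from $\{1,\dots,k-1\}$) with one non-moving entry (from $\{k+1,\dots,n\}$), and that throughout the process the moving entries and the non-moving entries each remain internally standard. Since the non-moving entries of $k+T$ already occupy a standard sub-tableau and are merely pushed around by south-east \emph{jeu-de-taquin} slides, while the moving entries are slid into their correct relative positions, the whole procedure is a controlled sequence of reversible local moves. First I would verify the claim on the value of $\delta(\jdt(k+T))$: the entry in the disconnected upper-right box starts as $n$ in $T$, becomes $n+k\equiv k$ after adding $k$ modulo $n$, and---being a non-moving entry when $k\ge 1$, or handled separately when $k=0$---I would check that it is never disturbed by an elementary step, so that indeed $\delta(\jdt(k+T))=k$. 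This confirms that $\jdt$ maps into the claimed codomain $\{P\in\SYT(\lambda^\Box):\delta(P)=k\}$.

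Next I would construct the inverse map. The natural candidate is a reverse straightening: given $P\in\SYT(\lambda^\Box)$ with $\delta(P)=k$, interpret its entries $\{1,\dots,k\}$ as the ``moving'' entries that must be increased by $n-k$ modulo $n$ and $\{k+1,\dots,n\}$ as stationary, and undo the elementary steps one at a time. Concretely, I would define an inverse elementary step that locates the appropriate \emph{tall} entry (the analogue, under reversal, of the short entry) and swaps it with the smaller neighbor below or to the right, running the south-east slides backwards as north-west slides. The main content is to show that the forward process is deterministic and reversible: because the minimal short entry at each step is uniquely determined, and because Lemma~\ref{lem:standard} guarantees the swap always exchanges a moving with a non-moving entry, the sequence of swaps is canonically recoverable. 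I would argue that applying the reverse procedure to $\jdt(k+T)$ recovers the non-moving entries' original positions (by invertibility of classical \emph{jeu-de-taquin} slides) and simultaneously returns the moving entries to their slots in $k+T$.

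The cleanest way to organize this is probably to track the two sub-tableaux separately. The non-moving entries undergo a genuine \emph{jeu-de-taquin} rectification, which is a bijection by the standard theory (e.g.~\cite[Appendix A1.2]{Stanley_ECII}); the moving entries are simply inserted into the vacated cells in increasing order, which is forced and hence reversible. I would therefore show that $k+T$ can be reconstructed from $P=\jdt(k+T)$ by: (i) identifying which cells of $P$ hold the moving entries $\{1,\dots,k-1\}$ together with the box holding $k$, (ii) reverse-rectifying the non-moving sub-tableau to recover the skew positions it occupied in $k+T$, and (iii) filling the remaining cells with the moving entries in the unique order making the result equal to $k+T$. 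Since both $\jdt$ and this reconstruction act on finite sets of equal cardinality (each is indexed by $\SYT(\lambda)$ via the bijection $Q\leftrightarrow T$), establishing that the reconstruction is a well-defined left inverse of $\jdt$ suffices to conclude that $\jdt$ is a bijection.

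I expect the main obstacle to be proving reversibility rigorously, specifically showing that the reverse elementary step is well-defined and that it exactly undoes the forward step. The subtlety is that the forward step's selection rule picks the \emph{minimal} short entry, so the inverse must pick the correct tall entry to swap back; one must verify that after a forward swap there is a canonically determined tall entry whose reverse-swap restores the previous tableau, and that no ambiguity arises between competing candidate entries. I would resolve this by leveraging the internal-standardness invariant from Lemma~\ref{lem:standard}: at every intermediate stage the configuration of moving versus non-moving entries is tightly constrained, which forces uniqueness of both the forward and the reverse swap. Once this local uniqueness is in hand, a straightforward induction on the number of elementary steps establishes that $\jdt$ and the reconstruction are mutually inverse.
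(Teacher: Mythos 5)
Your overall strategy---proving bijectivity by constructing an explicit reverse straightening procedure---is the same as the paper's, which defines a map $\ijdt$ on $-k+P$ whose elementary step picks the \emph{maximal} entry having a smaller neighbor below or to the right and swaps it with the smaller of those two neighbors. However, your proposal stops exactly where the real work begins. You assert that because the forward process is deterministic and because Lemma~\ref{lem:standard} guarantees each swap pairs a moving with a non-moving entry, ``the sequence of swaps is canonically recoverable.'' Determinism of a forward process never by itself implies invertibility, and the internal-standardness invariant of Lemma~\ref{lem:standard} is not enough to identify \emph{which} moving entry was the last one moved: a tableau can be internally standard on both its moving and non-moving parts while a large, not-yet-processed moving entry has a non-moving entry below or to its right, in which case the maximal-tall-entry rule would select the wrong entry to swap back. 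The paper's argument rules this out by a genuinely additional observation: since $\jdt$ treats moving entries in increasing order, any larger moving entry and everything weakly south-east of it is still untouched, hence agrees with the original \emph{standard} tableau $T$, whose standardness forces that entry not to be tall. A second omitted step is checking that the reverse swap goes in the correct direction (with the entry just moved, not with the entry below it); the paper needs a case analysis on whether the entry below is moving or non-moving.

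Your proposed ``cleanest'' reorganization---rectify the non-moving sub-tableau by classical \emph{jeu-de-taquin} and declare the placement of the moving entries forced---does not work as stated. Recovering the non-moving part of $k+T$ from that of $P$ is indeed canonical (rectification is independent of the order of slides), but the moving entries of $k+T$ occupy a \emph{skew} shape, and a straight-shape standard tableau generally has many skew-shape preimages under rectification (their number is a Littlewood--Richardson coefficient), so the moving part is not separately recoverable from its image; your step (iii), filling the remaining cells ``in the unique order making the result equal to $k+T$,'' is circular, since it presupposes knowledge of $k+T$. What makes inversion possible is precisely the coupling between the two sub-tableaux---which entry vacates which cell at which step---and tracking that coupling is exactly the content of the paper's step-by-step reversal. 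So the lemma is true and your high-level plan points in the right direction, but the proposal has a genuine gap at its central claim.
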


\begin{proof}
It is clear that both sets have the same cardinality $|\SYT(\lambda)|$, and that the image of any rotated tableau in the in the first set is in the second set.

To prove that the map is a bijection, we will describe the inverse map. Given $P\in\SYT(\lambda^\Box)$ with $\delta(P)=k$, consider the tableau $-k+P$, and define $\ijdt(-k+P)$ to be the tableau obtained by applying the following straightening procedure. Initialize by setting $P_0=-k+P$, and repeat the following elementary step until $P_0$ is a standard tableau:

\begin{itemize}
\item[($\ele'$)] Let $i$ be the maximal entry in $P_0$ for which the entry immediately below or to the right of it is smaller than $i$. Switch $i$ with the smaller of these two entries, and let $P_0$ be the resulting tableau.
\end{itemize}
Let $\ijdt(-k+P)$ be the resulting standard tableau.

\medskip

Let $T\in\SYT(\lambda^\Box)$ with $\delta(T)=n$, and let $P=\jdt(k+T)$.
We will show that
\[
\ijdt(-k+P)=T,
\]
that is, the inverse map of $\jdt$ is given by $\jdt^{-1}(P)=k+\ijdt(-k+P)$.
This fact is represented in the following diagram.
\begin{equation}\label{eq:diagram}
\begin{tikzcd}
k+T \arrow{r}{\jdt} & P \arrow{d}{-k} \\
T \arrow[swap]{u}{+k} & -k+P \arrow[swap]{l}{\ijdt}
\end{tikzcd}
\end{equation}

In any intermediate tableau obtained along the $\ijdt$ process applied to $-k+P$, and also in $T$, we define the {\em moving} entries to be $n+1-k,n+2-k,\dots,n-1$. Note that the moving entries in $T$ (resp. $P$) become the moving entries in $k+T$ (resp. $-k+P$) when adding (resp. subtracting) $k$ modulo $n$.

When applying $\jdt$ to $k+T$, the process starts by moving entry $1$ (if it is short) in the north and/or west direction (by switching it with non-moving entries) until it is no longer short. Then the algorithm does the same to entry $2$, and so on, until finally it moves entry $k-1$ until it is not longer short.

In $-k+P$, the moving entries are the only ones that may be tall. When applying $\ijdt$ to $-k+P$, the process starts by moving entry $n-1$ in the south and/or east direction until it is no longer tall, then it moves entry $n-2$ similarly, and so on, until it finally moves entry $n-k+1$.

We will show that each elementary step in the $\jdt$ process that takes $k+T$ to $P$ is reversed by an elementary step of $\ijdt$. This is illustrated the following diagram, where we use $\ele_j$ (resp. $\ele'_j$) to denote an elementary step of $\jdt$ (resp. $\ijdt$) that moves the entry $j$.
\begin{equation}
\begin{tikzcd}
k+T \arrow{r}{\ele_1} & .  \arrow{d}{\pm k} &[-2.5em] \cdots &[-2.5em] .  \arrow{r}{\ele_1} \arrow{d}{\pm k} & . \arrow{r}{\ele_2} \arrow{d}{\pm k} & \ \cdots\ \arrow{r}{\ele_{k-2}} & . \arrow{r}{\ele_{k-1}} \arrow{d}{\pm k}  & .  \arrow{d}{\pm k} &[-2.5em] \cdots &[-2.5em] . \arrow{r}{\ele_{k-1}} \arrow{d}{\pm k}  & P=\jdt(k+T) \arrow{d}{-k} \\
T=\ijdt(-k+P) \arrow[swap]{u}{+k} & . \arrow{u} \arrow[swap]{l}{\ele'_{n-k+1}} &[-2.5em] \cdots &[-2.5em] . \arrow{u}& . \arrow{u} \arrow[swap]{l}{\ele'_{n-k+1}}  &  \ \cdots\  \arrow[swap]{l}{\ele'_{n-k+2}} & . \arrow{u} \arrow[swap]{l}{\ele_{n-2}} & . \arrow{u} \arrow[swap]{l}{\ele'_{n-1}} &[-2.5em] \cdots &[-2.5em] . \arrow{u}& -k+P \arrow[swap]{l}{\ele'_{n-1}}
\end{tikzcd}
\end{equation}

Consider one elementary step $\ele$ in this $\jdt$ process. Let $Q$ be
the tableau at that moment, and let $k-i$ be the moving entry
about to be switched at that step (that is, $k-i$ is the minimal
short entry of $Q$). Let $a$ and $\ell$ be the entries above and
to the left of $k-i$ in $Q$, respectively, if they exist
(otherwise, define them to be $0$), and suppose that $\ell>a$ (the
case $\ell<a$ is symmetric). Since $k-i$ is short, we have
$\ell>k-i$, and the current elementary step $\ele$ switches $\ell$ with
$k-i$.

Let $Q'=\ele(Q)$ be the resulting tableau. 
It suffices to show that one elementary step $\ele'$ of $\ijdt$ applied to
$-k+Q'$ reverses this switch, that is $\ele'(-k+Q')=-k+Q$. The entries $k-i$ and $\ell$ in $Q$
become $n-i$ and $\ell-k$ in $-k+Q'$. Since $\ell$ was not a
moving entry in $Q$ (by Lemma~\ref{lem:standard}), $\ell-k$ is not
a moving entry in $-k+Q'$, but $n-i$ is, so we have $n-i>\ell-k$,
which means that $n-i$ is a tall entry in $-k+Q'$. We claim that
$n-i$ is the maximal tall entry in $-k+Q'$. This is because any
entry $n-j>n-i$ corresponds to a moving entry $k-j>k-i$ in $Q'$
and in $k+T$. Because moving entries are treated by $\jdt$ in
increasing order, neither $k-j$ nor the entries weakly south and
east of it in $k+T$ have been moved by $\jdt$ so far. Thus, the
entry $n-j$ in $-k+Q'$ and the entries weakly south and east of it
are the same as in $T$, which is standard. In particular, $n-j$ is
not tall in $-k+Q'$.

We have shown that $n-i$ is the maximal tall entry in $-k+Q'$.

If there is no entry below $\ell$ in $Q$, then there is no entry below $n-i$ in $-k+Q'$, so $\ele'$ switches $n-i$ and $\ell-k$ as desired.
Otherwise, let $s$ be the entry below $\ell$ in $Q$. If $s$ is a moving entry we are done, because moving entries are never switched with each other, by Lemma~\ref{lem:standard} adapted to $\ijdt$. If $s$ is not a moving entry, then the fact that $Q$ restricted to non-moving entries is standard implies that $s>\ell$. It follows that the corresponding entries in $-k+Q'$ satisfy $s-k>\ell-k$, which implies that $\ele'$ switches $n-i$ with $\ell-k$ in this case as well.
\end{proof}

\begin{example}\label{ex:jdt} For $T$ as in Examples~\ref{ex:1} and~\ref{ex:2} and $P=\jdt(3+T)$, applying $\ijdt$ to $-3+P$ we have
$$\begin{array}{rcccccl}
3+T=\young(:::3,462,51) & \overset{\ele_1}{\mapsto} & \young(:::3,412,56) & \overset{\ele_1}{\mapsto} &
\young(:::3,142,56) & \overset{\ele_2}{\mapsto} & \young(:::3,124,56)=\jdt(3+T)=P\medskip \\ 
\upmapsto +3&&&&&&\quad\downmapsto -3\medskip\\
T=\ijdt(-3+P)=\young(:::6,135,24)&\overset{\ele'_4}{\leftmapsto}&\young(:::6,145,23)&\overset{\ele'_4}{\leftmapsto}&\young(:::6,415,23)&\overset{\ele'_5}{\leftmapsto}&\young(:::6,451,23)=-3+P
\end{array}$$
\end{example}

To complete the proof of Theorem~\ref{thm:main}, it remains to show that $\jdt$ preserves the descent set. 
Recall from Definition~\ref{def_cyc_des_tab} that, for $1\le i\le n-1$, we have $i\in\Des(k+T)$ if and only $i+1$ is in a lower row than $i$ in $k+T$.

\begin{lemma}\label{lem:jdt-preserves-Des}
For every $0\le k<n$ and every $T\in\SYT(\lambda^\Box)$ with $\delta(T)=n$, we have
\[
\Des(\jdt(k+T))=\Des(k+T).
\]
\end{lemma}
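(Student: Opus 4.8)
The plan is to fix $k$ and compare, for each consecutive pair $\{i,i+1\}$ with $1\le i\le n-1$, the descent status in $k+T$ and in $P:=\jdt(k+T)$ directly, rather than one elementary step at a time; indeed a single elementary step need \emph{not} preserve $\Des$, since moving an entry changes the rows of consecutive pairs that are only corrected once a neighbouring moving entry is processed. The decisive structural observation is that the moving values $1,\dots,k-1$ and the non-moving values $k+1,\dots,n$ each form a contiguous block, and that these two blocks are separated by the single value $k$, which sits alone in the top row (the disconnected box, with $\delta(k+T)=\delta(P)=k$, and row~$1$ containing no other cell of $\lambda^\Box$). Consequently every pair $\{i,i+1\}$ is either internal to the moving block, internal to the non-moving block, or one of the two pairs $\{k-1,k\}$ and $\{k,k+1\}$ straddling $k$; crucially, there is \emph{no} pair with one moving and one non-moving entry. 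I would treat these three cases separately.

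For the straddling pairs the argument is immediate. The value $k$ lies in row~$1$ in both $k+T$ and $P$, while $k-1$ (a moving value, when $k\ge2$) and $k+1$ (a non-moving value) always lie in rows $\ge2$. Hence $k-1\notin\Des$ and $k\in\Des$ in both tableaux, so these pairs contribute identically to the two descent sets.

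For a pair internal to the non-moving block I would invoke Lemma~\ref{lem:standard}: the elementary steps devoted to one moving entry have, on the non-moving entries, the effect of a classical jeu-de-taquin slide, and the non-moving entries remain standard throughout. After the order-preserving relabelling $k+1,\dots,n\mapsto 1,\dots,n-k$, the non-moving entries thus pass from their standard skew configuration in $k+T$ to their standard skew configuration in $P$ through a sequence of jeu-de-taquin slides. Since each jeu-de-taquin slide preserves the descent set of a standard (skew) tableau (a classical fact, cf.~\cite[Appendix~A1.2]{Stanley_ECII}), iterating over the slides shows that the internal non-moving descents of $k+T$ and $P$ coincide.

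The remaining, and hardest, case is a pair internal to the moving block. Here the moving entries are gathered one by one, in increasing order, into a standard Young tableau $U$ occupying the top-left corner; in $k+T$ the same values form a standard skew tableau $S$ (the order-preserving relabelling of the cells carrying $n-k+1,\dots,n-1$ in $T$). I would prove that $U$ is exactly the jeu-de-taquin rectification $\operatorname{rect}(S)$, so that the classical fact that rectification preserves descents closes the case. Establishing this identification is the main obstacle: unlike the non-moving entries, the moving entries never slide through one another, and (as the $k=3$, $\lambda=(3,2)$ computation shows) their internal descent set genuinely fluctuates at intermediate stages, becoming correct only after \emph{all} moving entries are placed. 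Thus the equality $U=\operatorname{rect}(S)$ must be extracted from the way each bubbling entry $m$ settles against the already-placed smaller moving entries; I would show that this settling reproduces a jeu-de-taquin move on the moving sub-tableau and appeal to confluence of rectification to conclude $U=\operatorname{rect}(S)$. Combining the three cases yields $\Des(P)=\Des(k+T)$.
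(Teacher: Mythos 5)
Your three-way case split (pairs straddling $k$, pairs of non-moving entries, pairs of moving entries) is exactly the paper's decomposition, and your treatment of the first two cases coincides with the paper's proof: the straddling pairs are dispatched by noting that $k$ sits alone in the top row of both $k+T$ and $P$, and the non-moving pairs by the fact that on the non-moving entries the elementary steps act as classical jeu-de-taquin slides, which preserve descents.

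The gap is in the moving-block case, which is the heart of the lemma. Your target claim --- that the straight-shape tableau $U$ formed by the moving entries of $P$ equals $\operatorname{rect}(S)$ --- is in fact true, but the mechanism you propose for proving it would fail. The settling of a moving entry $m$ is \emph{not} a jeu-de-taquin move on the moving sub-tableau: during $m$'s journey no other moving entry changes position (Lemma~\ref{lem:standard}), and both the path of $m$ and the cell where it lands are dictated by the \emph{non-moving} entries it swaps with, data that is invisible to the moving sub-tableau alone. Consequently ``confluence of rectification'' (independence of $\operatorname{rect}$ from the choice of inner corners in a sequence of slides) does not apply, because the one-entry-at-a-time bubbling process is not a sequence of slides of the moving sub-tableau in any order. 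What your claim actually requires is the confluence theorem for \emph{tableau switching} of Benkart--Sottile--Stroomer: the $\jdt$ procedure is an instance of switching the inner (non-moving, straight-shape) tableau past the outer (moving, skew) tableau, each elementary step being a legal switch by Lemma~\ref{lem:standard}, and the switching theorem then says the fully switched outer tableau is $\operatorname{rect}(S)$ regardless of the order of switches. That is a genuine theorem whose proof is comparable in weight to the lemma itself, so invoking or reproving it is precisely the content your proposal leaves open. The paper avoids this machinery entirely: for a moving pair $\{i,i+1\}$ it argues by contradiction that a change in descent status would force the $\jdt$-paths of $i$ and $i+1$ to cross at some cell $C$, and an analysis of the two non-moving entries adjacent to $C$ at that moment (again via Lemma~\ref{lem:standard}) shows the crossing is impossible --- a pairwise argument that never needs to identify $U$. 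If you want to keep your route, you must either import the switching theorem explicitly or supply an independent proof that $U=\operatorname{rect}(S)$; as written, the crucial step is asserted rather than proved, and the justification offered for it is incorrect.
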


\begin{example} For the tableau $T$ in Example~\ref{ex:1}, we have
$$\begin{array}{ccc@{\quad}l}
2+T=\young(:::2,351,46)& \mapsto& \jdt(2+T)=\young(:::2,135,46), & 2+\cDes_\rot(T)=\{3,5,2\}=\Des(\jdt(2+T));\\
3+T=\young(:::3,462,51)& \mapsto&\jdt(3+T)=\young(:::3,124,56), & 3+\cDes_\rot(T)=\{4,6,3\}=\Des(\jdt(3+T))\cup\{6\};\\
4+T=\young(:::4,513,62)& \mapsto&\jdt(4+T)=\young(:::4,135,26), & 4+\cDes_\rot(T)=\{5,1,4\}=\Des(\jdt(4+T)).
\end{array}
$$
\end{example}

\begin{proof}[Proof of Lemma~\ref{lem:jdt-preserves-Des}]
We will show that for every $i\in[n-1]$, $i\in\Des(k+T)$ if and only if $i\in\Des(\jdt(k+T))$. The case $k=0$ is trivial, so we assume that $k\neq 0$. Since $\delta(k+T)=k$, it is clear that $k-1\notin\Des(k+T)$ but $k\in\Des(k+T)$, and the same holds for the tableau $\jdt(k+T)$. Thus, it suffices to consider the cases $i<k-1$ and $i>k$.
Recall that the entries smaller than $k$ are called moving entries in $k+T$.

\medskip
\noindent{\bf Case $i<k-1$}. The entries $i$ and $i+1$ in $k+T$ correspond to entries $n-k+i$ and $n-k+i+1$ in $T$, respectively, and so we either have that $i+1$ is strictly south and weakly west of $i$ (in which case $i\in\Des(k+T)$), or $i+1$ is weakly north and strictly east of $i$ (in which case $i\notin\Des(k+T)$). These possibilities are shown in Figure~\ref{fig:i}. Similarly, since $\jdt(k+T)$ is standard, the relative position of $i$ and $i+1$ is also given by one of the above two possibilities.

\begin{figure}[htb]
\centering
\begin{tikzpicture}[scale=.5]
\fill[green!40!white] (0,0) rectangle (-4,-3);
\fill[blue!40!white] (0,0) rectangle (4,3);
\fill[red!40!white] (0,0) rectangle (4,-3);
\draw (-1,0) rectangle (0,1);
\draw (-0.5,0.5) node {$i$};
\draw (-4,0)--(4,0);
\draw (0,-3)--(0,3);
\draw (2,2) node {$i+1$ here};
\draw (2,1) node {if $i\notin\Des$};
\draw (-2,-1) node {$i+1$ here};
\draw (-2,-2) node {if $i\in\Des$};
\draw (2,-1) node {$i+1$ can't};
\draw (2,-2) node {be here};
\end{tikzpicture}
\caption{The possible locations of $i+1$ relative to $i$ in $k+T$ and $\jdt(k+T)$}
\label{fig:i}
\end{figure}
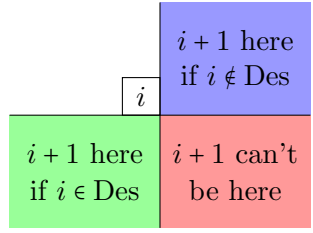

Suppose for contradiction that $i\notin \Des(k+T)$ but $i\in\Des(\jdt(k+T))$ (the case
$i\in \Des(k+T)$ but $i\notin\Des(\jdt(k+T))$ is symmetric with respect to the diagonal of $T$).
In other words, $i+1$ is weakly north and strictly east of $i$ in $k+T$, but strictly south and weakly west of $i$ in $\jdt(k+t)$. Consider the paths (as sequences of cells, each of them north or west from the previous one) that the entries $i$ and $i+1$ follow when $\jdt$ is applied to $k+T$, and recall that first $\jdt$ moves $i$ along its path, and afterwards it moves $i+1$.  The relative location of $i$ and $i+1$ before and after applying $\jdt$ forces these two paths to intersect, and so there must be a cell $C$ such that the path of $i$ leaves $C$ by going north, while the path of $i+1$ enters $C$ by going west. Let $a$ be the entry in $C$ right after $i$ leaves this cell, and let $b$ be the entry immediately northeast of $C$ at that time. Let us first argue that $a,b$ are both non-moving entries. This is clear for $a$ because it is switched with $i$, and the only switches involve a moving entry and a non-moving one  by Lemma~\ref{lem:standard}. If $b$ was moving, then we would have $i<b$, since the current tableau restricted to moving entries is standard by Lemma~\ref{lem:standard}, but then the fact that $i+1$ must be strictly south and weakly east of $b$ (implied by the fact that $i+1$ enters $C$ later in the $\jdt$ process) would contradict Lemma~\ref{lem:standard}.

Before $a$ and $i$ were switched, $a$ was immediately west of $b$, and so $a<b$, since the tableau restricted to the non-moving entries is standard by Lemma~\ref{lem:standard}.
But this contradicts that $i+1$ arrives at cell $C$ by moving west, since at that point the entry north of $i+1$ is $b$ and the entry west of $i+1$ is $a$.

\medskip

\noindent{\bf Case $i>k$}.  When we apply $\jdt$ to $k+T$, the effect on the subtableau consisting of the non-moving entries is the same as that of applying classical {\em jeu-de-taquin} slides in the south-east direction. For example, when $\jdt$ moves $1$ until this entry is no longer short, the algorithm restricted to entries larger than $k$ corresponds to a {\em jeu-de-taquin} slide into the position that $1$ occupied originally.
It is known that classical {\em jeu-de-taquin} preserves the descent set (see \cite[Lemma 3.2]{Doran}), and so the descents between entries larger than $k$ are preserved by $\jdt$.
\end{proof}

\section{Cyclic descents of SYT}\label{sec:cDes}

Rhoades introduced a notion of cyclic descents
on SYT of rectangular shapes, having the property that the
$\bbz_n$-action on SYT of fixed rectangular shape by promotion rotates their cyclic descent
sets~\cite{Rhoades}. In our notation, the promotion operation can be described as
$$T\mapsto\jdt(1+T).$$
It was noticed by Rhoades that the promotion operator does
not determine a $\bbz_n$-action on the set of SYT of a general shape.
In this section we extend the concept of cyclic descents to SYT of shape $\lambda^\Box$, for any partition $\lambda\vdash n-1$, and describe a $\bbz_n$-action on these tableaux that rotates their cyclic descent sets.
This extension is a consequence of the proof presented in
Section~\ref{sec:proof}.

\medskip

Recall from diagram~\eqref{eq:diagram} that, for every $P\in\SYT(\lambda^\Box)$,
$$\jdt^{-1}(P)=\delta(P)+\ijdt(-\delta(P)+P),$$
where $\delta(P)$ is the entry in the upper right box of $P$.

\begin{definition}\label{def:cDes}
For $P\in\SYT(\lambda^\Box)$, define its {\em cyclic descent set} by
$$\cDes(P):=\cDes_\rot(\jdt^{-1}(P)).$$
\end{definition}

\begin{example}
As in Example~\ref{ex:jdt}, for
$$P=\young(:::3,124,56),$$
we have $$\jdt^{-1}(P)=3+\ijdt(-3+P)=\young(:::3,462,51),$$
and so $\cDes(P)=\{3,4,6\}$.
\end{example}

\begin{proposition}\label{prop:action}
\begin{enumerate}
\item For every $P\in\SYT(\lambda^\Box)$,
$$\cDes(P)\cap[n-1]=\Des(P).$$
\item The map
\[
P\mapsto \jdt\left(1+\jdt^{-1}(P)\right) \]
determines a $\bbz_n$-action on $\SYT(\lambda^\Box)$, which rotates the cyclic descent sets of the
tableaux.
\end{enumerate}

\end{proposition}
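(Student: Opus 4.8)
The plan is to prove both parts by transporting everything to the world of rotated SYT through the bijection $\jdt^{-1}$, and then invoking the two properties of $\jdt$ established in Section~\ref{sec:proof}: its bijectivity (Lemma~\ref{lem:invertible}) and its descent-preservation (Lemma~\ref{lem:jdt-preserves-Des}).

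For part~(1), I would set $k=\delta(P)$ and recall from Lemma~\ref{lem:invertible} and diagram~\eqref{eq:diagram} that $\jdt^{-1}(P)=k+T$ for the unique $T\in\SYT(\lambda^\Box)$ with $\delta(T)=n$ satisfying $\jdt(k+T)=P$. By Definition~\ref{def:cDes} we have $\cDes(P)=\cDes_\rot(k+T)$, so intersecting with $[n-1]$ and using Definition~\ref{def_cyc_des_tab} gives $\cDes(P)\cap[n-1]=\Des(k+T)$. Finally, Lemma~\ref{lem:jdt-preserves-Des} yields $\Des(k+T)=\Des(\jdt(k+T))=\Des(P)$, which is exactly the claimed equality. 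This part is pure bookkeeping once the definitions are unwound.

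For part~(2), write $\psi$ for the map $P\mapsto\jdt(1+\jdt^{-1}(P))$ and put $Q=\jdt^{-1}(P)$. The key point is that $\jdt^{-1}$ conjugates $\psi$ to the elementary rotation $Q\mapsto 1+Q$ on rotated SYT. Since $Q$ has the form $k+T$ with $\delta(T)=n$, every shift $j+Q=(k+j)+T$ again lies in the domain of $\jdt$, so $\jdt^{-1}\circ\jdt$ acts as the identity on it; a one-line induction then gives $\psi^{j}(P)=\jdt(j+Q)$ for all $j\ge0$. Taking $j=n$ and using that $n\equiv0\pmod n$, we obtain $\psi^{n}(P)=\jdt(n+Q)=\jdt(Q)=P$, so $\psi^{n}=\id$ and $\psi$ generates a $\bbz_n$-action on $\SYT(\lambda^\Box)$. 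To see that this action rotates the cyclic descent sets, I would compute, using Definition~\ref{def:cDes} together with $\jdt^{-1}(\psi(P))=1+Q$, that $\cDes(\psi(P))=\cDes_\rot(1+Q)$; writing $Q=k+T$ and applying Observation~\ref{obs2} then gives $\cDes_\rot(1+Q)=1+\cDes_\rot(Q)=1+\cDes(P)$, as desired.

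The genuinely substantive content here---the bijectivity and descent-preservation of $\jdt$---is already proved, so the only step needing any care is the domain-tracking in part~(2): one must check that iterating $Q\mapsto1+Q$ keeps every intermediate tableau of the form $(\text{integer})+T$ for a \emph{fixed} standard $T$ with $\delta(T)=n$, which is precisely what makes the conjugation telescope and forces $\psi^{n}=\id$. I expect this to be the main (and essentially only) subtlety, and it is routine.
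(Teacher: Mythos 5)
Your proposal is correct and takes essentially the same route as the paper: part (1) is the same unwinding of Definitions~\ref{def_cyc_des_tab} and~\ref{def:cDes} combined with Lemma~\ref{lem:jdt-preserves-Des}, and part (2) is the same computation $\cDes(\jdt(1+\jdt^{-1}(P)))=\cDes_\rot(1+\jdt^{-1}(P))=1+\cDes(P)$ via Observation~\ref{obs2}. Your explicit domain-tracking and verification that $\psi^n=\id$ merely spells out a step the paper leaves implicit, and it is carried out correctly.
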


\begin{proof}
1. Using Lemma~\ref{lem:jdt-preserves-Des} and Definitions~\ref{def_cyc_des_tab} and~\ref{def:cDes},
$$\Des(P)=\Des(\jdt^{-1}(P))=\cDes_\rot(\jdt^{-1}(P))\cap[n-1]=\cDes(P)\cap[n-1].$$

\noindent 2.  By Observation~\ref{obs2} and Definition~\ref{def:cDes},
$$
\cDes(\jdt(1+\jdt^{-1}(P))=\cDes_\rot(1+\jdt^{-1}(P))=1+\cDes_\rot(\jdt^{-1}(P))=1+\cDes(P).\qedhere
$$
\end{proof}

The above proposition raises the natural problem of finding a unified approach to other shapes. The concept of a cyclic descent satisfying the properties in Proposition~\ref{prop:action} can be generalized as follows.

\begin{definition}
Let $B$ be a set of combinatorial objects carrying a descent set map $\Des:B\to2^{[n-1]}$.
A {\em cyclic descent extension} for $B$
is a pair $(\psi,\cDes)$ where $\psi$ is a $\bbz_n$-action on $B$
and $\cDes$ is a map from $B$ 
to $2^{[n]}$ such that, for all $T\in B$, 
\begin{itemize}
\item[(i)] $\cDes(T)\cap[n-1]=\Des(T)$,
\item[(ii)] $\cDes(\psi(k)T)=k+\cDes(T)$ for all $k\in\bbz_n$.
\end{itemize}
\end{definition}

\medskip

Two examples of cyclic descent extensions that were known before this paper are the following:
\begin{itemize}
\item Take $B=\S_n$, let $\cDes$ be as in Definition~\ref{def_cyc_des_perm}, and 
let $\psi$ be right multiplication by $c^{-1}$ (that is, horizontal rotation). See Observation~\ref{obs1}.
\item Take $B=\SYT(r^{n/r})$, the set of SYT of given rectangular shape,
let $\cDes$ be as defined by Rhoades~\cite{Rhoades}, and let $\psi$ be the promotion operation on SYT.
\end{itemize}

In this paper we have introduced two new examples of cyclic descent extensions:
\begin{itemize}
\item Let $B$ be the set of rotated SYT of a given shape, let $\cDes=\cDes_\rot$ be as in Definition~\ref{def_cyc_des_tab}, and let $\psi$ be addition modulo $n$. See Observation~\ref{obs2}.
\item Take $B=\SYT(\lambda^\Box)$, let $\cDes$ be as in Definition~\ref{def:cDes}, and let
 $\psi$ be the map in Part 2 of Proposition~\ref{prop:action}.
\end{itemize}

\begin{problem}
For which skew shapes $\lambda/\mu$ does there exist a cyclic descent extension for $B=\SYT(\lambda/\mu)$?
\end{problem}

This problem, which was posed in an early version of the paper, is currently being addressed in~\cite{AER,ARR}.


\begin{thebibliography}{}

\bibitem{CA} R.\ M.\ Adin, C.\ A.\ Athansiadis, S.\ Elizalde and Y.\
Roichman, {\em Character formulas and descents for the
hyperoctahedral group}. Preprint, 2015, {\tt arXiv:1504.01283}.

\bibitem{Adin-R} R.\ M\ Adin and Y.\ Roichman, {\em Matrices, characters and
descents}. Linear Algebra Appl.~{\bf 469} (2015), 381--418.

\bibitem{AER} R.\ M.\ Adin, S.\ Elizalde and Y.\ Roichman, {\em Cyclic descents for SYT of two-row shapes}. In preparation.

\bibitem{ARR} R.\ M.\ Adin, V.\ Reiner and Y.\ Roichman, {\em On cyclic descents of standard Young tableaux}. In preparation.


\bibitem{CA2}
 C.\ A.\ Athanasiadis, {\em    Power sum expansion of chromatic quasisymmetric
 functions}. Preprint, 2014, {\tt arXiv:1409.2595}.

\bibitem{Cellini} P.\ Cellini,
{\em Cyclic Eulerian elements}. European J.\ Combin.~{\bf 19}
(1998), 545--552.

\bibitem{Dilks} K.\ Dilks, T.\ K.\ Petersen and J.\ R.\  Stembridge,
{\em Affine descents and the Steinberg torus}. Adv.\ in Appl.\
Math.~{\bf 42} (2009), 423--444.

\bibitem{Doran}W.\ F.\ Doran,
{\em A plethysm formula for $p_\mu(\ul{x})\circ h_\mu(\ul{x})$}.
Electron.\ J.\ Combin.~{\bf 4} (1997), Research Paper 14, 10 pp.
(electronic).

\bibitem{ER_pos_grids} S.\ Elizalde and Y.\ Roichman, {\em Schur-positive sets of permutations via products of grid classes}. J.\ Algebraic Combin. (2006), doi:10.1007/s10801-016-0710-x.


\bibitem{Gessel} I.\ M.\ Gessel, {\em Multipartite $P$-partitions and inner products of Schur
functions}. Contemp.\ Math.~{\bf 34} (1984), 289--302.

\bibitem{Gessel-Reutenauer} I.\ M.\ Gessel and C.\ Reutenauer,
{\em Counting permutations with given cycle structure and descent
set}. J.\ Combin.\ Theory Ser.\ A~{\bf  64} (1993), 189--215.

\bibitem{Petersen}
T.\ K.\ Petersen, {\em Cyclic descents and $P$-partitions}. J.\
Algebraic Combin.~{\bf 22} (2005), 343--375.


\bibitem{PS}  T.\ K.\ Petersen and L. Serano, {\em Cyclic sieving for longest reduced words
in the hyperoctahedral group}.  Electron. J. Combin. 17 (2010),
Research Paper 67, 12 pp.

\bibitem{Rhoades} B.\ Rhoades, {\em Cyclic sieving, promotion, and representation
theory}.  J.\ Combin.\ Theory Ser.\ A~{\bf  117} (2010), 38--76.

\bibitem{Sagan_talk} B.\ E.\ Sagan, {\em Pattern avoidance and
quasisymmetric functions}. The 13th International Permutation
Patterns conference, London, UK, 2015. \url{https://sites.google.com/site/pp2015london/slides}

\bibitem{Sh-Wachs} J.\ Shareshian and M.\ L.\ Wachs,
{\em Chromatic quasisymmetric functions}. Preprint, 2014, {\tt
arXiv:1405.4629}.

\bibitem{Staley_thesis}
R.\ P.\ Stanley,
 Ordered structures and partitions. Memoirs of the American Mathematical Society, no. 119. American Mathematical Society, Providence, R.I., 1972

\bibitem{Stanley_ECII}
R.\ P.\ Stanley, Enumerative combinatorics, Vol.\ 2. Cambridge
Studies in Adv.\ Math., no.\ 62. Cambridge Univ.\ Press,
Cambridge, 1999.

\bibitem{Stanley_problems}
R.\ P.\ Stanley, {\it Positivity problems and conjectures in
algebraic combinatorics}. In: Mathematics: Frontiers and
Perspectives (V.\ Arnold, M.\ Atiyah, P.\ Lax, and B.\ Mazur,
eds.), American Mathematical Society, Providence, RI, 2000, pp.
295--319.

\end{thebibliography}
\end{document}